\newtheorem {theorem} {Theorem}
\newtheorem {proposition} [theorem]{Proposition}
\newtheorem {corollary} [theorem]{Corollary}
\newtheorem {lemma}  [theorem]{Lemma}
\newcommand{\R}{\mathbb{R}}      
\newcommand{\df}[2]{\displaystyle{\frac{#1}{#2}}}
\begin{document}

\title{Trapezoid central configurations}

\author[M. \ Corbera]{M.\ Corbera}
\address{M.\ Corbera:
Departamento de Matem\'aticas \\
Universidad de Vic, Spain} \email{montserrat.corbera@uvic.cat}

\author[J.M. \ Cors]{J.M. \ Cors}
\address{J.M.\ Cors:
Departamento de Matem\'aticas \\
Universidad Polit\'ecnica de Catalunya, Spain}
\email{cors@epsem.upc.edu}

\author[J. \ Llibre]{J. \ Llibre}
\address{J.\ Llibre:
Departamento de Matem\'aticas \\
Universidad Aut\'onoma de Barcelona, Spain}
\email{jllibre@mat.uab.cat}

\author[E. P\'erez-Chavela]{E. P\'erez-Chavela}
\address{E. P\'erez-Chavela: Departamento de Matem\'aticas, ITAM\\
R\'io Hondo 1, Col. Progreso Tizap\'an, Ciudad de  M\'exico 01080,
M\'exico} \email{ernesto.perez@itam.mx}

\keywords{$4$-body problem, convex central configurations,
trapezoidal central configurations}

\subjclass[2010]{70F15,70F10,37N05}

\begin{abstract}
We classify all planar four--body central configurations where two
pairs of the bodies are on parallel lines. Using Cartesian
coordinates, we show that the set of four--body trapezoid central
configurations with positive masses forms a two--dimensional surface
where two symmetric families, the rhombus and isosceles trapezoid,
are on its boundary. We also prove that, for a given position of the
bodies, in some cases an specific order of the masses determine the
geometry of the configuration, namely acute or obtuse trapezoid
central configuration. We also prove the existence on non--symmetric
trapezoid central configuration with two pairs of equal masses.
\end{abstract}

\maketitle

\section{Introduction}\label{sec:intro}

Central configurations are particular positions of the masses in the
Newtonian $n$--body problem, where the position and acceleration
vectors with respect to the center of masses are proportional, with
the same constant of proportionality for all masses. They play an
important role in celestial mechanics because, among other
properties, they generate the unique known explicit solutions in the
$n$--body problem for $n \geq 3$. For general information about
central configurations see for instance Albouy and Chenciner
\cite{AC}, Hagihara \cite{hag}, Moeckel \cite{rick2}, Saari
\cite{saa, saari}, Schmidt \cite{sch}, Smale \cite{Sm1, Sm2} and
Wintner \cite{wintner}.

More precisely we consider the planar $n$--body problem
$$
m_k\,\ddot{\mathbf{q}}_k=-\sum_{\scriptsize \begin{array}{c}
j=1\\j\ne k\end{array}}^n
G\,m_k\,m_j\,\df{\mathbf{q}_k-\mathbf{q}_j}{|\mathbf{q}_k-\mathbf{q}_j|^3}
\ ,
$$
$k=1,\dots,n$, being $\mathbf{q}_k\in\mathbb{R}^2$ the position
vector of the punctual mass $m_k$ in an inertial coordinate system,
and $G$ is the gravitational constant that we can take equal to one
by choosing conveniently the unit of time. The \emph{configuration
space} of the planar $n$--body problem is
$$
\mathcal{E}=\{(\mathbf{q}_1,\dots,\mathbf{q}_n)\in\mathbb{R}^{2 n}\:
:\:  \mathbf{q}_k\ne \mathbf{q}_j,\ \mbox{for } k\ne j\} .
$$

A configuration of the $n$ bodies
$(\mathbf{q}_1,\dots,\mathbf{q}_n)\in\mathcal{E}$ is \emph{central}
if there is a positive constant $\lambda$ such that
\begin{equation}
\ddot{\mathbf{q}}_k=-\lambda\,\left(\mathbf{q}_k-\mathbf{c}\right)\
, \label{sistema}
\end{equation}
for $k=1,\dots,n$, being $\mathbf{c}$ the position vector of the
center of mass of the system, which is defined by $
\mathbf{c}=\sum_{k=1}^n m_k \mathbf{q}_k/\sum_{k=1}^n m_k. $

Two planar central configurations are \emph{equivalent} if there is
a homothecy of $\mathbb{R}^2$ and a rotation of $SO(2)$ with respect
to the center of mass which send one into the other. Since this
relation is of equivalency, in what follows we shall consider the
classes of equivalency of central configurations.

The complete set of planar central configurations of the $n$--body
problem is only known for $n=2,3$. For $n=2$ there is only one class
of central configurations. For each choice of three positive masses
there are five classes of central configurations of the three--body
problem, the three collinear central configurations found in 1767 by
Euler \cite{eul}, and the two equilateral triangle central
configurations found in 1772 by Lagrange \cite{La}.

When $n>3$ there are many partial results for the number of classes
of central configurations of the $n$--body problem. In 1910 Moulton
\cite{Mo} showed that there exists exactly $n!/2$ classes of
collinear central configurations for any given set of positive
masses, one for each ordering of the masses on the straight line
modulo a rotation of $\pi$ radians. A lower bound of the number of
planar non--collinear central configurations was obtained by Palmore
in \cite{Palmore}.

Although the set of all planar central configurations of the
four--body problem is not completely known, we can find in the
literature several papers that provide the existence and
classification of central configurations of the four--body problem
in some particular cases. For instance, a complete numerical study
for the number of classes of central configurations for $n=4$ and
arbitrary masses was done by Sim\'o in \cite{Simo}. A computer
assisted proof of the finiteness of the number of central
configurations for $n=4$ and any choice of the masses was given by
Hampton and Moeckel \cite{HM}. Later on Albouy and Kaloshin
\cite{AK} proved this result analytically, and extend it for for $n=5$,
except for a zero measure set in the masses space .

Assuming that every central configuration of the four--body problem
with equal masses has an axis of symmetry Llibre in \cite{Llibre4}
obtained all the planar central configurations of the four--body
problem with equal masses by studying the intersection points of two
planar curves. Later on Albouy in \cite{A1,A2} gave a complete
analytic proof of this result.

When one of the four masses is sufficiently small Pedersen
\cite{Pedersen}, Gannaway \cite{Gann} and Arenstorf \cite{Arenstorf}
numerically and analytically obtained the number of its classes of
central configurations. These studies were completed later on by
Barros and Leandro in \cite{Barros1} and \cite{Barros2}.

A central configuration is called \emph{kite} if it has an axis of
symmetry passing through two non--adjacent bodies. The kite
non--collinear classes of central configurations having some
symmetry for the four--body problem with three equal masses where
characterized by Bernat et al. in \cite{BLP}, see also Leandro
\cite{Leandro}. The characterization of the convex central
configurations with an axis of symmetry and the concave central
configurations of the four--body problem when the masses satisfy
that $m_1=m_2\ne m_3=m_4$ was done by \'Alvarez and Llibre in
\cite{AL}.

A planar configuration of the four--body problem can be classified
as either \emph{convex} or \emph{concave}. A configuration  is
\emph{convex} if none of the bodies is located in the interior of
the triangle formed by the others. A configuration
 is \emph{concave} if one of the bodies is in the
interior of the triangle formed by the others.

In \cite{MB} MacMillan and Bartky shown that for any assigned order
of any four positive masses there is a convex planar central
configuration of the four--body problem with that order. Later on,
Xia \cite{xia} provided a simpler proof of this result. The
following {\it convex conjecture} stated by Albouy and Fu in
\cite{AF} (see also \cite{MB, PS}) is well known between the
community working in central configurations: {\it  For the planar
four--body problem there is a unique convex  central
configuration of the four--body problem for each ordering of the
masses in the boundary of its convex hull}.

Already, MacMillan and Bartky in \cite{MB} proved that there exists a
unique isosceles trapezoid central configuration of the four--body
when two pairs of equal masses are located at adjacent vertices.
Later on Xie in \cite{XieZhifu} reproved this result.

The following subconjecture of the convex conjecture is also well
known: {\it For the planar four--body problem there exists a unique convex
central configuration having two pairs of equal masses
located at the adjacent vertices of the configuration and it is an
isosceles trapezoid}.

In \cite{LongSun} Long and Sun shown that any convex central
configuration with masses $m_1=m_2<m_3=m_4$ located at the opposite
vertices of a quadrilateral and such that the diagonal corresponding
to the mass $m_1$ is not shorter than the one corresponding to the
mass $m_3$, has a symmetry and the quadrilateral is a rhombus. This
result was extended by P\'erez--Chavela \cite{PS} and Santoprete to
the case where two of the masses are equal and at most, only one of
the remaining mass is larger than the equal masses. Moreover, they
proved that there is only one convex central configuration when the
opposite masses are equal and it is a rhombus. Later on Albouy et.
al. in \cite{AFS} shown that in the four--body problem a convex
central configuration is symmetric with respect to one diagonal if
and only if the masses of the two particles on the other diagonal
are equal. If these two masses are unequal, then the less massive
one is closer to the former diagonal.

Using the results on the symmetries mentioned in the previous
paragraph Corbera and Llibre \cite{CoL} gave a complete description
of the families of central configurations of the four--body problem
with two pairs of equals masses and two equal masses sufficiently
small, proving for these masses the convex conjecture and the
subconjecture. More recently, the subconjecture was proved for
arbitrary masses by Fernandes et al. in \cite{FLM}.

The co-circular classes of central configurations of the four--body
problem, i.e. when the four masses are on a circle have been studied
by Cors and Roberts in \cite{cors}.

A {\it trapezoid} is a convex quadrilateral with at least one pair
of parallel sides. The parallel sides are called the {\it bases} of
the trapezoid and the other two sides are called the {\it lateral
sides}. See Figure~\ref{figtrap} for the classification of the nine
trapezoids, namely:
\begin{itemize}
\item An {\it acute trapezoid} has two adjacent acute angles on its
longer base edge.

\item An {\it obtuse trapezoid} has one acute and one obtuse angle
on each base.

\item A {\it right trapezoid} has two adjacent right angles.

\item An {\it isosceles trapezoid} is an acute trapezoid if its
lateral sides have the same length, and the base angles have the
same measure.

\item A {\it $3$--sides equal trapezoid} is an isosceles trapezoid
with three sides of the same length.

\item A {\it parallelogram} is an obtuse trapezoid with two pairs
of parallel sides.

\item A {\it rhombus} is a parallelogram with the four sides with
the same length.

\item A {\it rectangle} is a parallelogram with four right angles.

\item A {\it square} is a rectangle with the four sides with
the same length.
\end{itemize}

\begin{figure}
\unitlength=1cm
\begin{center}
\begin{picture}(12,7)(0,-0.7)
\put(0,0){\includegraphics[width=12cm]{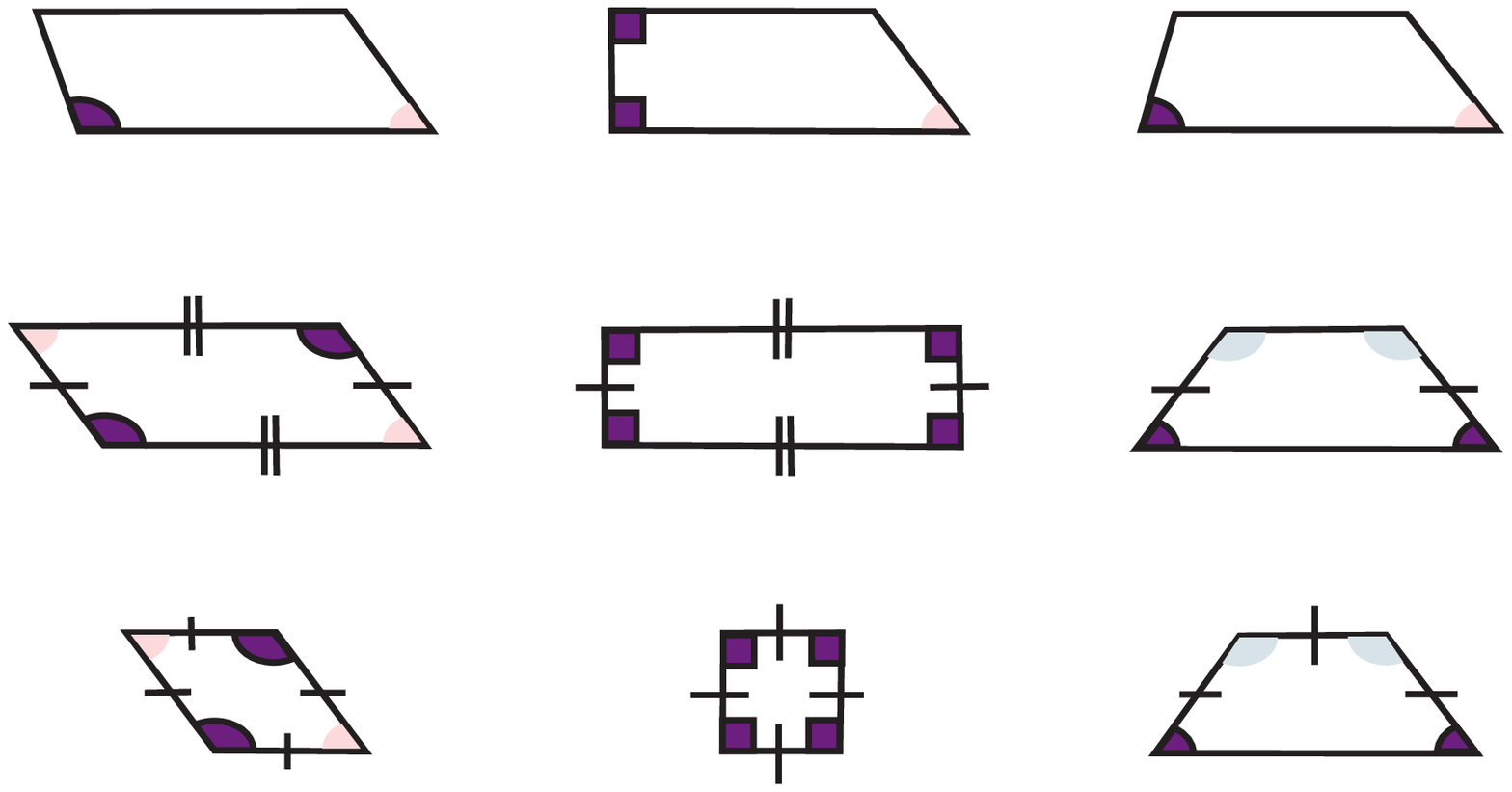}}
\put(0.7,4.5){Obtuse trapezoid} \put(0.9,2){Parallelogram}
\put(1.5,-0.4){Rhombus} \put(5.2,4.5){Right trapezoid}
\put(5.5,2){Rectangle} \put(5.7,-0.4){Square} \put(9.3,4.5){Acute
trapezoid} \put(9.2,2){Isosceles trapezoid} \put(9.1,-0.4){Isosceles
trapezoid} \put(9.1,-0.8){with 3 equal sides}
\end{picture}
\end{center}
\caption{Classification of Trapezoids} \label{figtrap}
\end{figure}

In this paper we are interested in studying trapezoid central
configurations. See \cite{santoprete} for a really fresh work in the
same topic. In section \ref{equations} we derive the equations for
the trapezoid central configurations in terms of the mutual
distances. In section \ref{no} we prove that not all the trapezoid
configurations are realizable. In section \ref{si} we characterize,
using cartesian coordinates, the set of positions that yield to
trapezoid central configurations with positive masses. In section
\ref{right} we prove that there exist a one--parameter family of
right trapezoid central configurations. Finally, in section
\ref{masses} we study the set of positive masses which yields to a
trapezoid central configurations. We prove, in contrast to the
co--circular case, that two pair of equal masses do not imply that
the central configuration has some symmetry.

\section{Preliminaries}\label{prelim}

The central configurations (in what follows simply CC by short) can
be described in terms of Lagrange multipliers. We denote by ${\bf
q}=({\bf q}_1, {\bf q}_2, {\bf q}_3, {\bf q}_4) \in (\R^2)^4$ the
position of four positive masses $m_1, m_2, m_3, m_4$ on the plane
and by $r_{ij}=||\mathbf{q}_i-\mathbf{q}_j||$ the mutual distances
between the $i$--th and the $j$--th bodies. The vector ${\bf q}$ is a CC
of the $4$--body problem if it satisfies the following algebraic
equation for some value of $\lambda$ (the Lagrange multiplier)
\begin{equation}\label{eq:cc}
\nabla U ({\bf q}) + \lambda \, \nabla I ({\bf q}) = 0,
\end{equation}
where $U$ is the Newtonian potential
\begin{equation}\label{eq:pot}
U({\bf q}) = \sum_{i < j}^n  \; \frac{m_i m_j}{r_{ij}},
\end{equation}
$I({\bf q})$ is the moment of inertia, which represents the size of
the system,
\begin{equation}\label{eq:iner}
I({\bf q}) =\frac{1}{2}\sum_{i=1}^n
m_i||\mathbf{q}_i-\mathbf{c}||^2= \frac{1}{2M} \sum_{1\le i < j\le
n} \; m_i m_j r_{ij}^2,
\end{equation}
${\bf c}$ is the center of mass of the system, and $M = m_1 + \cdots
+ m_n$ is the total mass (see \cite{meyer} for more details).

We observe that generically six mutual distances describe a
tetrahedron in $\R^3$, since in this work we are  interested in
planar CC, when we write equation (\ref{eq:cc}) in terms of mutual
distances, we must add a constraint to maintain the particles on a
plane. This constraint arises setting the volume of the tetrahedron
equals to zero. Denoting as ${\bf r} = (r_{12}, r_{13}, r_{14},
r_{23}, r_{24}, r_{34}) \in \mathbb{R}^{+^6}$ the vector of mutual
distances,  it is well know in the literature (see for instance
\cite{saari, sch}), that the volume $V$ of a tetrahedron is given by
the Cayley-Menger determinant
$$
V^2 \; = \;  \frac{1}{288}\left|  \begin{array}{ccccc}
        0 & 1  & 1   & 1  & 1 \\
             1 & 0  & r_{12}^2 & r_{13}^2 & r_{14}^2  \\
                  1 & r_{12}^2 &  0  & r_{23}^2  & r_{24}^2 \\
                  1 & r_{13}^2 & r_{23}^2 & 0 & r_{34}^2    \\
                  1 & r_{14}^2 & r_{24}^2 & r_{34}^2 & 0
              \end{array} \right|.
$$

{From} now on we will assume that $V = 0$. Also in order to avoid
collinear configurations we impose that all triples of mutual
distances satisfy  strictly the triangle inequality (see \cite{cors}
for more details).

Let $A_i$ be the oriented area of the triangle formed by the
configuration ${\bf q}$ where the point ${\bf q}_i$ is deleted,  and
let $\Delta_i=(-1)^{i+1}A_i$. Since $A_i>0$ when the vertices are
ordered sequentially counterclockwise, for a convex quadrilateral
ordered sequentially counterclockwise we obtain $\Delta_1, \Delta_3
>0$ and $\Delta_2, \Delta_4 < 0,$ satisfying the equation
$$\Delta_1 + \Delta_2 + \Delta_3 + \Delta_4 = 0.$$
In 1900 Dziobeck \cite{dzio} proved for planar CC that
\begin{equation*}\label{eq:dzi}
\frac{ \partial V}{ \partial r_{ij}^2} \; = \;  - 32 \, \Delta_i
\Delta_j \, .
\end{equation*}
{From} this equality we obtain
$$\frac{ \partial V}{ \partial r_{ij}}  =
\frac{ \partial V}{ \partial r_{ij}^2} \cdot \frac{ d (r_{ij}^2)}{d
r_{ij}} =    - 64 r_{ij} \,  \Delta_i  \Delta_j .$$

Fixing the moment of inertia $I = I_0$ and applying Lagrange
multipliers,  we have that the planar non-collinear CC are the
critical points of the function
\begin{equation}\label{eq:cc-2}
U + \lambda M (I - I_0) - \sigma V.
\end{equation}

Taking the partial derivatives and using the six mutual distances as
variables we obtain
\begin{eqnarray}\label{eq:part-der}
m_1 m_2 (r_{12}^{-3} - \lambda) = 64\sigma \Delta_1\Delta_2, & &
m_3 m_4 (r_{34}^{-3} - \lambda)
= 64\sigma  \Delta_3\Delta_4 ,  \nonumber \\
m_1 m_3 (r_{13}^{-3} - \lambda) = 64\sigma \Delta_1\Delta_3, &  &
m_2 m_4 (r_{24}^{-3} - \lambda)
= 64\sigma \Delta_2\Delta_4 ,   \\
m_1 m_4 (r_{14}^{-3} - \lambda) = 64\sigma \Delta_1\Delta_4, & &
m_2 m_3 (r_{23}^{-3} - \lambda) = 64\sigma \Delta_2\Delta_3 .
\nonumber
\end{eqnarray}

Grouping the above equations by row, so that the product of the
right-hand side is simply $(64\sigma)^2 \Delta_1\Delta_2
\Delta_3\Delta_4$, and since the masses are positive we obtain the
well known Dziobeck relation
\begin{equation}\label{eq:dzio-rel}
(r_{12}^{-3} - \lambda)(r_{34}^{-3} - \lambda) = (r_{13}^{-3} -
\lambda)(r_{24}^{-3} - \lambda) =  (r_{14}^{-3} -
\lambda)(r_{23}^{-3} - \lambda),
\end{equation}
which must be satisfied for any planar 4-body CC. Solving each of
the three pairs of equations  for $\lambda$ we obtain
\begin{equation}\label{eq:lambda}
\begin{array}{rl}
\lambda \; =& \dfrac{  r_{12}^{-3} \, r_{34}^{-3}  -  r_{13}^{-3} \,
r_{24}^{-3} }{r_{12}^{-3} + r_{34}^{-3} - r_{13}^{-3} - r_{24}^{-3}}\\
= & \dfrac{  r_{13}^{-3} \, r_{24}^{-3}  - r_{14}^{-3} \,
r_{23}^{-3} }{r_{13}^{-3} + r_{24}^{-3} - r_{14}^{-3} - r_{23}^{-3}}\\
= & \dfrac{  r_{14}^{-3} \, r_{23}^{-3}  - r_{12}^{-3} \,
r_{34}^{-3} }{r_{14}^{-3} + r_{23}^{-3} - r_{12}^{-3} -
r_{34}^{-3}}.
\end{array}
\end{equation}

If we set
$$
\begin{array}{ccc}
s_1 = r_{12}^{-3} + r_{34}^{-3}, & \qquad  &  p_1 = r_{12}^{-3}
r_{34}^{-3},
\\[0.07in]
s_2 = r_{13}^{-3} + r_{24}^{-3}, &   &  p_2 = r_{13}^{-3}
r_{24}^{-3},
\\[0.07in]
s_3 = r_{14}^{-3} + r_{23}^{-3}, &   &  p_3 = r_{14}^{-3}
r_{23}^{-3},
\end{array}
$$
then equation (\ref{eq:lambda}) can be written as
\begin{equation}\label{eq:lambda-bis}
\lambda \; = \;  \frac{p_1 - p_2}{s_1 - s_2}  \; = \;  \frac{p_2 -
p_3}{s_2 - s_3}  \; = \;  \frac{p_3 - p_1}{s_3 - s_1},
\end{equation}
which means that $(s_1,p_1), (s_2,p_2), (s_3,p_3)$, viewed as points
in $\mathbb{R}^{+2}$, must lie on the same line with slope
$\lambda$.  This in turn, is equivalent to
$$
\left|  \begin{array}{ccc}
         1      & 1       &  1   \\[0.05in]
                 s_1    & s_2     & s_3  \\[0.05in]
             p_1    & p_2     & p_3
              \end{array} \right|
              \; = \; 0,
$$
a representation that allows to write Dziobeck equation
(\ref{eq:dzio-rel}) as the nice factorization
$$
D=(r_{13}^3 - r_{12}^3)(r_{23}^3 - r_{34}^3) (r_{24}^3 - r_{14}^3) -
(r_{12}^3 - r_{14}^3) (r_{24}^3 - r_{34}^3) (r_{13}^3 - r_{23}^3)=0
$$

The Dziobeck equation $D=0$ must be satisfied for the six mutual
distances of every four-body planar central configuration.

\section{Equations of trapezoidal central configurations}\label{equations}

We consider four positive masses $m_1,m_2,m_3,m_4$ located at the
vertices of a trapezoid, i.e. located  by pairs on two parallel
lines, which without loss of generality we assume are vertical. Since
the central configurations are invariant under homotheties we can
take the distance between the two parallel lines equal to one, after
normalizing the unity of mass we can assume that $m_1=1$ located at
the bottom part of the left line,  $m_2$ above $m_1$ on the same
line and $m_3$ above $m_4$ on the right line (see Fig.~\ref{fig1}).
From now on we will use this ordering and normalization of the units
of mass and length in this work.

\begin{figure}
\unitlength=1cm
\begin{picture}(4,5)(0,-1)
\thicklines \put(0.5,0){\line(0,1){4}} \put(2.5,0){\line(0,1){4}}
\put(0.5,0.5){\circle*{0.18}} \put(0.5,3.5){\circle*{0.18}}
\put(2.5,2.5){\circle*{0.18}} \put(2.5,1){\circle*{0.18}}
\put(-1,0.4){$m_1=1$} \put(-0.3,3.4){$m_2$} \put(2.8,1.0){$m_4$}
\put(2.8,2.4){$m_3$} \thinlines \put(0.5,0.5){\line(4,1){2}}
\put(0.5,3.5){\line(2,-1){2}}
\end{picture}
\caption{Central configurations with two parallel lines}
\label{fig1}
\end{figure}
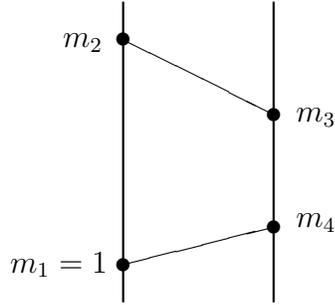

All trapezoidal central configurations are convex, so from the
results of McMillan \cite{MB}, we know first that the diagonals of
the respective quadrilateral are longer that any of the four sides,
that is
\begin{equation} \label{ineq}
r_{13}, r_{24}  \; >  \; r_{12}, r_{14}, r_{23}, r_{34};
\end{equation}
and second that the bigger and the smaller sides of the
quadrilateral correspond to opposite sides. We note that in the
restricted problem, i.e. when one or more masses are equal to zero,
one of the sides of the quadrilateral could be equal to one
diagonal.

\begin{lemma}
The biggest side of the quadrilateral is on the parallel lines.
\label{lemadesigualtat}
\end{lemma}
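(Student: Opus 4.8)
The plan is to set up Cartesian coordinates adapted to the two vertical lines and then argue by contradiction using the Dziobeck relations \eqref{eq:dzio-rel}. Place the left line at $x=0$ and the right line at $x=1$, so that ${\bf q}_1=(0,a_1)$, ${\bf q}_2=(0,a_2)$ with $a_2>a_1$, and ${\bf q}_3=(1,b_3)$, ${\bf q}_4=(1,b_4)$ with $b_3>b_4$. The two sides lying on the parallel lines are then $r_{12}=a_2-a_1$ and $r_{34}=b_3-b_4$, while the two lateral sides are $r_{14}$ and $r_{23}$, and the diagonals are $r_{13}$ and $r_{24}$. By \eqref{ineq} the diagonals are strictly the two longest of the six distances, so the biggest side is one of $r_{12},r_{34},r_{14},r_{23}$; by the second assertion of MacMillan--Bartky quoted after \eqref{ineq}, the biggest and the smallest sides are opposite sides, i.e. the biggest side is either in the pair $\{r_{12},r_{34}\}$ (the two bases) or in the pair $\{r_{14},r_{23}\}$ (the two lateral sides). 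So I only need to rule out the possibility that a lateral side, say $r_{14}$, is the strict maximum among all four sides.

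First I would record the purely geometric consequence of the trapezoid shape: since $r_{14}$ and $r_{23}$ are the horizontal legs of right triangles of base $1$ and heights $|b_4-a_1|$ and $|b_3-a_2|$ respectively, one has $r_{14}^2=1+(b_4-a_1)^2$ and $r_{23}^2=1+(b_3-a_2)^2$, and similarly $r_{13}^2=1+(b_3-a_1)^2$, $r_{24}^2=1+(b_4-a_2)^2$. From $a_2>a_1$ and $b_3>b_4$ one gets the ordering of the vertical displacements and hence, comparing these four quantities, the chain $r_{13}>r_{14}$, $r_{13}>r_{23}$, $r_{24}>r_{14}$, $r_{24}>r_{23}$ together with the key relation $\max(r_{14},r_{23}) \le$ something controllable in terms of $r_{12},r_{34}$; the point is to extract, from convexity and the trapezoidal placement, an inequality forcing a base to be at least as long as the larger lateral side. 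Concretely, if $r_{14}$ were the strict maximum of the four sides, then in particular $r_{14}>r_{12}$ and $r_{14}>r_{34}$, which translates into $(b_4-a_1)^2 > (a_2-a_1)^2$ and $(b_4-a_1)^2 > (b_3-b_4)^2$; I expect these, combined with $a_2>a_1$ and $b_3>b_4$, to already contradict convexity of the ordering ${\bf q}_1,{\bf q}_4,{\bf q}_3,{\bf q}_2$ (the sign conditions $\Delta_1,\Delta_3>0$, $\Delta_2,\Delta_4<0$), because a lateral side that long forces the quadrilateral to be non-convex or the cyclic order to be wrong.

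If the direct geometric argument is not clean enough, the fallback is to use \eqref{eq:dzio-rel}. Suppose $r_{14}=\max$ among the four sides and WLOG $r_{14}\ge r_{23}$. Since $r_{14}$ is strictly less than both diagonals, $r_{14}^{-3}$ is the smallest among $r_{12}^{-3},r_{34}^{-3},r_{14}^{-3}$ in its row-grouping, and one analyses the three equal products $(r_{12}^{-3}-\lambda)(r_{34}^{-3}-\lambda)=(r_{13}^{-3}-\lambda)(r_{24}^{-3}-\lambda)=(r_{14}^{-3}-\lambda)(r_{23}^{-3}-\lambda)$ together with the positivity of all masses in \eqref{eq:part-der}; a sign bookkeeping on the factors $r_{ij}^{-3}-\lambda$ (using that $\lambda$ lies between the largest and smallest of the $r_{ij}^{-3}$) forces $r_{14}^{-3}$ to be bounded below by $\min(r_{12}^{-3},r_{34}^{-3})$, i.e. $r_{14}\le\max(r_{12},r_{34})$, the desired contradiction. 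The main obstacle is the case analysis for where $\lambda$ sits relative to the six quantities $r_{ij}^{-3}$ and making sure the convex sign pattern $\Delta_1\Delta_3>0>\Delta_1\Delta_2$ is used consistently; I expect that once the coordinates are fixed this reduces to a short finite check, and the bulk of the work is organising it so no case is missed.
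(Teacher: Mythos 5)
Your reduction of the problem to ruling out a lateral side being the strict maximum is fine, but neither of the two arguments you propose for that step works. The direct claim --- that $r_{14}>r_{12},r_{34}$ together with $a_2>a_1$, $b_3>b_4$ already contradicts convexity of the cyclic order --- is false: any two disjoint vertical segments form a convex quadrilateral, and a lateral side can perfectly well be the longest side. For instance $q_1=(0,0)$, $q_2=(0,1)$, $q_3=(1,0.55)$, $q_4=(1,-0.5)$ is convex in the order $1,4,3,2$, its sides are $r_{12}=1$, $r_{34}=1.05$, $r_{23}=\sqrt{1.2025}\approx 1.097$, $r_{14}=\sqrt{1.25}\approx 1.118$, so the lateral side $r_{14}$ is the longest, and even the central-configuration inequality \eqref{ineq} holds, since $r_{13}=\sqrt{1.3025}\approx 1.141$ and $r_{24}=\sqrt{3.25}\approx 1.803$ exceed every side. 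So convexity, even reinforced by \eqref{ineq}, cannot yield the contradiction; what fails in such examples is only that they are not central configurations.

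The fallback via ``sign bookkeeping'' on \eqref{eq:dzio-rel} cannot be repaired either: the Dziobeck relation and the sign pattern coming from \eqref{eq:part-der} are invariant under the relabelling $2\leftrightarrow 4$, which fixes the diagonals $r_{13},r_{24}$, exchanges the pair $\{r_{12},r_{34}\}$ with the pair $\{r_{14},r_{23}\}$, and preserves the convexity signs of the $\Delta_i$. Hence no analysis of the factors $r_{ij}^{-3}-\lambda$ alone can distinguish the bases from the lateral sides; the most it gives is that the largest and smallest sides are opposite (the MacMillan--Bartky fact you already quoted), never that the largest side lies on the parallel lines, so the claimed conclusion $r_{14}\leq\max(r_{12},r_{34})$ has no proof in your scheme. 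The missing ingredient is the specifically trapezoidal geometry: the lateral sides satisfy $r_{14},r_{23}\geq 1$, and the two bases are related by $r_{34}=r_{12}\pm\sqrt{r_{23}^2-1}\pm\sqrt{r_{14}^2-1}$ according to the relative heights of the bodies. The paper's proof assumes a lateral side (say $r_{23}$) is the largest, uses the opposite-side fact to make $r_{14}$ the smallest, and then runs through the four sign choices of this relation, obtaining in each case either $r_{34}>r_{23}$ or $r_{34}<r_{14}$, a contradiction, the mixed-sign cases being handled by the elementary inequality that for $y>x\geq 1$ one has $x-\sqrt{x^2-1}+\sqrt{y^2-1}>y$. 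Some input of this kind must enter your argument before the case analysis you envisage can close.
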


\begin{proof}
Assume that $r_{23}$ is the biggest side and that we exclude the
case where all the sides are equal, that is, the square. So, its
opposite side, $r_{14}$, has to be the smaller one, and
$$
r_{13}, r_{24}  \; >  \; r_{23} \; \ge  \; r_{12}, r_{34} \; \ge  \;
r_{14}>1.
$$
Then depending on the relative position of the four masses we have
the following four scenarios:
\begin{itemize}
\item[(a)] $r_{34}=r_{12}+\sqrt{ r_{23}^2-1}+\sqrt{ r_{14}^2-1},$
\item[(b)] $r_{34}=r_{12}+\sqrt{ r_{23}^2-1}-\sqrt{ r_{14}^2-1},$
\item[(c)] $r_{34}=r_{12}-\sqrt{ r_{23}^2-1}+\sqrt{ r_{14}^2-1},$
\item[(d)] $r_{34}=r_{12}-\sqrt{ r_{23}^2-1}-\sqrt{ r_{14}^2-1}.$
\end{itemize}
Notice that the cases where $m_3$ and $m_4$ are either both above
$m_2$ or both below $m_1$ are not possible because in these cases
one of the diagonals would be smaller than one of the sides.

In all scenarios we will arrive to a contradiction with the fact that
$r_{23}$ is the biggest side or $r_{14}$ the smaller one.

In the scenario (a), $r_{34}>r_{12}+\sqrt{ r_{23}^2-1}>1+\sqrt{
r_{23}^2-1}>r_{23}$.

For the scenarios (b) and (c) we shall use the following result: Let
$x,y\geq 1$ be two real numbers,
$$
x-\sqrt{ x^2-1}+\sqrt{ y^2-1}=y \quad\text{if and only if}\quad x=y.
$$
Moreover if $y>x$ then $x-\sqrt{ x^2-1}+\sqrt{ y^2-1}>y$, and if
$y<x$ then $x-\sqrt{ x^2-1}+\sqrt{ y^2-1}<y$.

In (b) $r_{34}>\sqrt{ r_{23}^2-1}+r_{14}-\sqrt{ r_{14}^2-1}>r_{23}$,
and in (c) $r_{34}<r_{23}-\sqrt{ r_{23}^2-1}+\sqrt{
r_{14}^2-1}<r_{14}$.

Finally, in (d) $r_{34}<r_{23}-\sqrt{ r_{23}^2-1}+\sqrt{
r_{14}^2-1}<r_{14}$ from scenario (c).

Similar argument works if $r_{14}>1$ is considered the biggest side.
\end{proof}

Without loss of generality we label the bodies so that $r_{12}$ is
the longest side. We can also assume that $r_{23}\geq r_{14}$ by an
appropriate relabeling. Indeed, equations (\ref{eq:part-der}) are
invariant if we interchange bodies $m_1$ and $m_2$ and bodies $m_3$
and $m_4$. The choice $r_{23}\geq r_{14}$, together with the fact
that $r_{12}$ is the longest side, implies the relation $r_{24}\geq
r_{13}$ between the two diagonals.

Summarizing, we have proved the following result.

\begin{lemma}\label{lemomega}
Labelling conveniently the bodies, the mutual distances that can
provide trapezoid central configurations can be restricted to the
following set
$$
\widetilde{\Omega}=\{{\bold r}\in\mathbb{{R}^{+}}^{6}:r_{24}\geq
r_{13}>r_{12}\geq r_{23}\geq r_{14}\geq r_{34}\}.
$$
\end{lemma}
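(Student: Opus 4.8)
The plan is to assemble the chain of inequalities from three ingredients already established: the convexity inequalities \eqref{ineq}, Lemma~\ref{lemadesigualtat}, and the labelling freedom afforded by the symmetry of \eqref{eq:part-der}. First, by Lemma~\ref{lemadesigualtat} the longest side of the trapezoid lies on one of the two parallel lines; after relabelling the two parallel lines (and hence the pairs $\{m_1,m_2\}$ and $\{m_3,m_4\}$) if necessary, we may assume the longest side is $r_{12}$. This gives $r_{12}\geq r_{13},r_{14},r_{23},r_{34}$ among the sides, and combined with \eqref{ineq} it does not yet order $r_{12}$ relative to the diagonals, but MacMillan--Bartky's result \eqref{ineq} gives $r_{13},r_{24}>r_{12}$, so in fact $r_{13},r_{24}>r_{12}\geq r_{23},r_{14},r_{34}$.

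Next I would use the residual labelling freedom: equations \eqref{eq:part-der} are invariant under the simultaneous swap $m_1\leftrightarrow m_2$, $m_3\leftrightarrow m_4$, which exchanges $r_{13}\leftrightarrow r_{24}$ and $r_{14}\leftrightarrow r_{23}$ while fixing $r_{12}$ and $r_{34}$. Hence, without disturbing the fact that $r_{12}$ is the longest side, we may arrange $r_{23}\geq r_{14}$. The remaining point to verify is the two induced inequalities $r_{24}\geq r_{13}$ and $r_{23}\geq r_{34}$. For the diagonals, I would argue geometrically from Figure~\ref{fig1}: with the parallel lines vertical at horizontal distance $1$, write each side length in terms of the vertical coordinates of the endpoints, so that $r_{13}^2=1+(y_3-y_1)^2$, $r_{24}^2=1+(y_4-y_2)^2$, $r_{14}^2=1+(y_4-y_1)^2$, $r_{23}^2=1+(y_3-y_2)^2$; then $r_{23}\geq r_{14}$ is equivalent to $|y_3-y_2|\geq |y_4-y_1|$, and a short computation with the convex ordering of the $y$-coordinates on each line shows this forces $|y_4-y_2|\geq |y_3-y_1|$, i.e. $r_{24}\geq r_{13}$. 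The inequality $r_{23}\geq r_{34}$ follows because $r_{34}$ is a side and $r_{23}$ was already shown to dominate every side except possibly $r_{12}$; since here $r_{12}\geq r_{23}$ by choice and $r_{34}\leq r_{12}$ with $r_{34}$ the side opposite to the longest side, the MacMillan--Bartky ``longest and shortest sides are opposite'' statement places $r_{34}$ as the shortest side, hence $r_{34}\leq r_{14}\leq r_{23}$.

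I expect the main obstacle to be the bookkeeping of which relabellings are admissible: one must check that the swap used to enforce $r_{23}\geq r_{14}$ genuinely preserves both the trapezoidal structure (it does, since it just renames the two bodies on each parallel line) and the normalization $m_1=1$ — the latter is only a cosmetic rescaling, so no real loss occurs. A second, more delicate point is the implication $r_{23}\geq r_{14}\Rightarrow r_{24}\geq r_{13}$; this is where the convexity of the quadrilateral is essential, since for a self-intersecting configuration the implication would fail. Once the $y$-coordinate inequalities are set up with the correct signs dictated by the convex cyclic order $1,2,3,4$, this reduces to the elementary fact that if $a\geq b\geq 0$ and $c\geq d\geq 0$ then $a+c\geq b+d$, applied to the appropriate vertical gaps. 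Collecting all the inequalities yields $r_{24}\geq r_{13}>r_{12}\geq r_{23}\geq r_{14}\geq r_{34}$, which is precisely the defining condition of $\widetilde{\Omega}$, completing the proof.
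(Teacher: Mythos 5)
Your route is the same as the paper's up to one step: Lemma~\ref{lemadesigualtat} and \eqref{ineq} give $r_{13},r_{24}>r_{12}\geq r_{23},r_{14},r_{34}$ once the longest side is labelled $r_{12}$, the swap $m_1\leftrightarrow m_2$, $m_3\leftrightarrow m_4$ (which fixes $r_{12},r_{34}$ and exchanges $r_{14}\leftrightarrow r_{23}$, $r_{13}\leftrightarrow r_{24}$ in \eqref{eq:part-der}) enforces $r_{23}\geq r_{14}$, and the MacMillan--Bartky opposite-sides statement gives $r_{14}\geq r_{34}$. The genuine gap is your justification of $r_{24}\geq r_{13}$. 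You derive it from $r_{23}\geq r_{14}$ plus ``the convex ordering of the $y$-coordinates'', reducing to the fact that ordered nonnegative numbers have ordered sums; but convexity is not the hypothesis that makes this work, and the implication is false for a general convex trapezoid: take $q_1=(0,0)$, $q_2=(0,1)$, $q_3=(1,2)$, $q_4=(1,1/2)$, a convex trapezoid with $r_{23}=\sqrt{2}>r_{14}=\sqrt{5}/2$ and yet $r_{24}=\sqrt{5}/2<\sqrt{5}=r_{13}$. (Of course $r_{12}$ is not the longest side there --- which is exactly the point: the hypothesis your vertical-gap computation never invokes is the one that is needed.) Writing $y_1=0$, $y_2=a$, $y_3=b$, $y_4=c$, the conditions $r_{12}\geq r_{23},r_{14}$ force $0<b$ and $|c|<a$, so the claim $r_{24}\geq r_{13}$ reads $a-c\geq b$. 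If $c\leq 0$ this follows from $a\geq b-c$, i.e. from $r_{12}\geq r_{34}$, not from $r_{23}\geq r_{14}$; if $c>0$ it follows from $|a-b|\geq c$, i.e. from $r_{23}\geq r_{14}$, except when $b-a\geq c$, which together with $a\geq b-c$ forces $r_{12}=r_{34}$ and $r_{23}=r_{14}$, a parallelogram, where in fact $r_{13}>r_{24}$ strictly (e.g. $a=1.2$, $b=1.3$, $c=0.1$). So even with the longest-side hypothesis there is an equality case in which the implication literally fails, and there one must use the labelling freedom once more (harmless, since $r_{23}=r_{14}$ makes the swap compatible with all the other inequalities); your argument as written would not detect this.

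In short, your decomposition (Lemma~\ref{lemadesigualtat}, \eqref{ineq}, the relabelling swap, the opposite-sides fact for $r_{14}\geq r_{34}$) is exactly the paper's, and the one assertion the paper itself leaves unproved --- that $r_{23}\geq r_{14}$ together with $r_{12}$ longest yields $r_{24}\geq r_{13}$ --- is precisely the step you set out to verify; the convexity-only reduction you propose does not prove it. Replace it by the short case analysis on the sign of $c$ above, using $r_{12}\geq r_{34}$, and dispose of the parallelogram tie by relabelling (or by noting such configurations are excluded later).
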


Next we give the expression of the masses ratios for the trapezoid
central configurations on $\widetilde{\Omega}$. Taking into account
the sign of the areas $A_i$, we have $\Delta_1 = -r_{34}/2, \,
\Delta_2 = r_{34}/2, \, \Delta_3 = -r_{12}/2, \, \Delta_4 =
r_{12}/2$  (note that we have considered the bodies ordered
clockwise). Now from (\ref{eq:part-der}) we obtain the following
ratios of the masses
\begin{eqnarray}
\frac{m_2}{m_1} & = & \frac{  (\lambda - r_{13}^{-3})  }{
(r_{23}^{-3} - \lambda  ) } \; = \;
\frac{  (\lambda - r_{14}^{-3} ) }{ ( r_{24}^{-3} - \lambda ) },
\label{eq:m1m2} \\[0.07in]
\frac{m_3}{m_1} & = & \frac{  (r_{12}^{-3} - \lambda) \, r_{12} }{
(r_{23}^{-3} - \lambda) \,  r_{34}} \; = \;
\frac{  (r_{14}^{-3} - \lambda) \, r_{12} }{ (r_{34}^{-3} - \lambda)
\,  r_{34}},  \label{eq:m1m3}  \\[0.07in]
\frac{m_4}{m_1} & = & \frac{  (r_{12}^{-3} - \lambda) \, r_{12} }{
(\lambda - r_{24}^{-3}) \,  r_{34}}  \; = \; \frac{  (\lambda -
r_{13}^{-3}) \, r_{12} }{ (r_{34}^{-3} - \lambda) \,  r_{34}} .
\label{eq:m1m4}
\end{eqnarray}

We observe that the fact that all masses must be positive places
additional constraints on the mutual distances. Using  $\lambda =
(p_2 - p_3)/(s_2 - s_3)$ and $m_1=1$ into the first equation
in~(\ref{eq:m1m2}) and after some simplifications we obtain
\begin{equation}\label{eq:fin12}
m_2  =  \frac{  r_{23}^3 r_{24}^3 \, (r_{13}^3 - r_{14}^3)}{
r_{13}^3 r_{14}^3 \, (r_{24}^3 - r_{23}^3)}.
\end{equation}
Doing similar substitutions in~(\ref{eq:m1m3}) and~(\ref{eq:m1m4})
we get
\begin{equation}\label{eq:fin13}
m_3  =  \frac{  r_{23}^3 r_{34}^2 \, (r_{12}^3 - r_{14}^3)}{
r_{12}^2 r_{14}^3 \, (r_{23}^3 - r_{34}^3)},
\end{equation}
and
\begin{equation}\label{eq:fin14}
m_4  =  \frac{  r_{24}^3 r_{34}^2 \, (r_{13}^3 - r_{12}^3)}{
r_{12}^2 r_{13}^3 \, (r_{24}^3 - r_{34}^3)},
\end{equation}
respectively.

The masses of equations (\ref{eq:fin12}),  (\ref{eq:fin13}) and
(\ref{eq:fin14}) are positive and well-defined on
$\widetilde{\Omega}$, except when $r_{12}=r_{14}$ and
$r_{23}=r_{34}$ simultaneously. In that case, we use $\lambda = (p_1
- p_2)/(s_1 - s_2)$ into equation (\ref{eq:m1m3}) getting
\begin{equation}\label{eq:squa}
m_3 \;  = \; \frac{r_{34}^5(r_{14}^3 - r_{24}^3)( r_{14}^3
-r_{13}^3)}{r_{14}^5(r_{13}^3 - r_{34}^3)( r_{24}^3 -r_{34}^3)} ,
\end{equation}
which also is positive and well-defined on $\widetilde{\Omega}$.

In summary we have proved the next result.

\begin{lemma}\label{lemomega2}
Let
$$
\widetilde{\Omega}'=\{{\bold r}\in\mathbb{{R}^{+}}^{6}\ : {\bold
r}\in\widetilde{\Omega}\,\, and \,\, D=0\}.
$$
Any point in $\widetilde{\Omega}'$ defines a four--body trapezoid
central configuration with positive masses. Moreover, up to
relabelling and rescaling the set $\widetilde{\Omega}'$ contains all
trapezoid central configurations.
\end{lemma}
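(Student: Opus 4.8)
The plan is to verify that Lemma~\ref{lemomega2} follows directly by assembling the three preliminary lemmas already proved. The statement has two assertions: (i) every $\mathbf{r}\in\widetilde{\Omega}'$ yields a genuine four-body trapezoid central configuration with all four masses positive; and (ii) every trapezoid central configuration, after relabelling the bodies and rescaling lengths and masses, corresponds to a point of $\widetilde{\Omega}'$. I would treat (i) and (ii) separately.

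For (i), start with $\mathbf{r}\in\widetilde{\Omega}'$. By definition $\mathbf{r}\in\widetilde{\Omega}$, so the strict inequalities $r_{24}\geq r_{13}>r_{12}\geq r_{23}\geq r_{14}\geq r_{34}$ hold; in particular $r_{13}>r_{12}$ rules out a degenerate (collinear) configuration, and the triangle inequalities are in force as arranged in Section~\ref{prelim}. Since also $D=0$, the Dziobeck equation is satisfied, so by the analysis of Section~\ref{prelim} there is a planar four-body configuration with these mutual distances and a well-defined multiplier $\lambda$ given by~(\ref{eq:lambda-bis}). The mass ratios are then forced by~(\ref{eq:fin12})--(\ref{eq:fin14}), or, in the exceptional locus $r_{12}=r_{14}$, $r_{23}=r_{34}$, by~(\ref{eq:squa}) together with the analogous formulas for $m_2,m_4$. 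The computation preceding the lemma shows each of these expressions is strictly positive on $\widetilde{\Omega}$ (the only place the first batch of formulas could fail is precisely the exceptional locus, which is covered by~(\ref{eq:squa})), so all masses are positive. Finally one must note that the configuration realizing these distances is indeed a trapezoid: the ordering on $\widetilde{\Omega}$ was derived under the standing assumption (Section~\ref{equations}) that the four bodies lie in pairs on two parallel vertical lines, so the geometry is trapezoidal by construction. Hence $\mathbf{r}$ defines a four-body trapezoid central configuration with positive masses.

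For (ii), take an arbitrary trapezoid central configuration of four positive masses. Up to a homothety we may scale the distance between the two parallel lines to $1$, and up to scaling the mass unit we may set the mass on the bottom of the left line to $1$; this is the normalization fixed at the start of Section~\ref{equations}. Lemma~\ref{lemadesigualtat} tells us the longest side lies along the parallel lines, and the relabelling argument given just after it (interchanging $m_1\leftrightarrow m_2$, $m_3\leftrightarrow m_4$, under which~(\ref{eq:part-der}) is invariant) lets us arrange $r_{12}$ to be the longest side and $r_{23}\geq r_{14}$, whence $r_{24}\geq r_{13}$ as noted there. Together with the convexity inequalities~(\ref{ineq}) and the MacMillan--Bartky fact that the longest and shortest sides are opposite, this places $\mathbf{r}$ in $\widetilde{\Omega}$; and being a central configuration it satisfies $D=0$, so $\mathbf{r}\in\widetilde{\Omega}'$. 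This establishes (ii) and completes the proof.

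I do not expect a genuine obstacle here: the lemma is essentially a bookkeeping summary of Lemmas~\ref{lemadesigualtat}--\ref{lemomega} and the positivity computation that precedes it. The one point that needs a little care is the exceptional locus $r_{12}=r_{14}$, $r_{23}=r_{34}$ (which, given the chain of inequalities defining $\widetilde{\Omega}$, forces $r_{12}=r_{23}=r_{14}=r_{34}$, i.e. all four sides equal): there the quotients~(\ref{eq:fin12})--(\ref{eq:fin14}) become $0/0$ and one must invoke~(\ref{eq:squa}) and its companions, checking they remain positive and finite — which the text has already done. A second minor point worth spelling out is why a point of $\widetilde{\Omega}'$ cannot accidentally be collinear or non-convex; this is handled by the strict inequality $r_{13}>r_{12}$ and the triangle-inequality convention adopted in Section~\ref{prelim}, so no extra work is required.
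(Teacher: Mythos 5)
Your proposal is correct and takes essentially the same route as the paper, which proves Lemma~\ref{lemomega2} simply by assembling Lemma~\ref{lemadesigualtat}, the relabelling argument of Lemma~\ref{lemomega}, and the positivity of the mass expressions (\ref{eq:fin12})--(\ref{eq:fin14}) on $\widetilde{\Omega}$, with the degenerate locus $r_{12}=r_{14}$, $r_{23}=r_{34}$ handled by (\ref{eq:squa}). One cosmetic remark: on that locus only the formula for $m_3$ becomes $0/0$, so no ``analogous formulas'' for $m_2$ and $m_4$ are needed --- (\ref{eq:fin12}) and (\ref{eq:fin14}) remain valid there, exactly as in the text.
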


\section{The trapezoids which are not realizable as central
configuration}\label{no}

In this section we prove that the vertices of the parallelogram, the
rectangle and the $3$--sides equal trapezoid are not realizable as
central configurations of the four-body problem with the exception
of the square and the rhombus.

Assume that the bodies are ordered sequentially as in
Figure~\ref{fig1}.

\begin{proposition}\label{prop1nn}
In the planar four--body problem there are no parallelogram shape
central configurations with positive masses at their vertices,
excluding squares and rhombus.
\end{proposition}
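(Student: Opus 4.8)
The plan is to exploit the fact that a parallelogram is a trapezoid in which the two diagonals coincide in length with... no — rather, a parallelogram has $r_{12}=r_{34}$ and $r_{14}=r_{23}$, i.e. opposite sides equal. First I would impose these two equalities on the Dziobeck factorization $D=0$ from Section \ref{prelim}, together with the parallelogram relation between the diagonals and the sides (the parallelogram law $r_{13}^2+r_{24}^2 = 2(r_{12}^2+r_{14}^2)$, or more usefully the Cartesian description: placing the vertices symmetrically about the origin forces $r_{13}=r_{24}$ only in the rectangle case, so in general the diagonals are distinct). The key point is that in a parallelogram the configuration is centrally symmetric about the centroid, so the center of mass coincides with the geometric center only if the opposite masses are equal; but a central configuration forces a compatibility between the geometry and the masses through equations (\ref{eq:part-der}).

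The cleanest route is to use equations (\ref{eq:fin12})--(\ref{eq:fin14}) (or the more symmetric mass formulas (\ref{eq:m1m2})--(\ref{eq:m1m4})) directly. For a parallelogram we substitute $r_{34}=r_{12}$ and $r_{23}=r_{14}$. Then from (\ref{eq:m1m3}) and (\ref{eq:m1m4}) one reads off $m_3$ and $m_4$ as explicit rational functions of $r_{12}, r_{13}, r_{14}, r_{24}$; a central configuration additionally requires the Dziobeck equation $D=0$. So the plan is: (i) write $D=0$ under the substitution $r_{34}=r_{12}$, $r_{23}=r_{14}$ and show it factors, with one factor being $(r_{13}-r_{24})$ (the rectangle locus) and the complementary factor forcing either $r_{12}=r_{14}$ (which combined with the parallelogram conditions gives the rhombus) or an impossible relation among the remaining distances; (ii) along the surviving branch, feed the resulting distance relation back into the mass formulas and check positivity of all four masses fails — i.e. one of $m_2, m_3, m_4$ becomes non-positive or the system is inconsistent. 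The symmetry of (\ref{eq:part-der}) under swapping $(m_1,m_2)$ and $(m_3,m_4)$, noted right before Lemma \ref{lemomega}, lets us assume $r_{12}\ge r_{14}$ throughout, cutting the case analysis in half.

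Concretely I expect the argument to reduce to this: substituting the parallelogram conditions into the Dziobeck factorization $D$ and into $\lambda$ from (\ref{eq:lambda}), the middle equation of (\ref{eq:lambda}) becomes $\lambda = (p_2-p_3)/(s_2-s_3)$ with $s_3 = 2r_{14}^{-3}$, $p_3 = r_{14}^{-6}$, and one shows $\lambda$ must then equal $r_{14}^{-3}$ (or $r_{12}^{-3}$) unless $r_{13}=r_{24}$; but $\lambda = r_{14}^{-3}$ plugged into (\ref{eq:m1m2}) forces $m_2/m_1 = 0$, a contradiction with positivity, while the rectangle branch $r_{13}=r_{24}$ is handled by a separate short computation showing the only solution with positive masses is the square. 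The main obstacle will be step (ii): controlling the signs in the surviving branch of the factorization, since the distance inequalities available from $\widetilde\Omega$ degenerate when $r_{12}=r_{34}$ and $r_{14}=r_{23}$ (the ordering in Lemma \ref{lemomega} collapses), so I will need to argue the sign of the relevant numerator/denominator in (\ref{eq:fin12}) or (\ref{eq:squa}) by hand rather than quoting $\widetilde\Omega$. I would also double-check that the rectangle-that-is-not-a-square genuinely fails; this is the place where one typically invokes that in a rectangle the two diagonals are equal, so $r_{13}^{-3}=r_{24}^{-3}$ and the Dziobeck relation collapses to conditions that, with $r_{12}\ne r_{14}$, have no positive-mass solution.
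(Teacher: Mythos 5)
Your overall strategy (impose $r_{34}=r_{12}$, $r_{23}=r_{14}$ in the Dziobeck equation and then argue via convexity and mass positivity) can be made to work, but it is not the paper's route, and two of the concrete steps you predict do not happen. The paper's proof is a one-line consequence of Lemma~\ref{lemomega2}: after the relabelling of Lemma~\ref{lemomega}, every trapezoid central configuration with positive masses has its sides ordered as $r_{12}\geq r_{23}\geq r_{14}\geq r_{34}$, and the parallelogram equalities $r_{12}=r_{34}$, $r_{23}=r_{14}$ collapse this chain to $r_{12}=r_{23}=r_{14}=r_{34}$, i.e.\ a rhombus or a square; no manipulation of $D$ is needed. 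Your reason for avoiding $\widetilde\Omega$ (that the ordering ``collapses'' under the parallelogram equalities) is misplaced: the inequalities there are non-strict, and the collapse is precisely the desired conclusion.

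Concerning your own plan, the factorization you anticipate is not the one that occurs. Substituting $r_{34}=r_{12}$ and $r_{23}=r_{14}$ into $D$ gives
\begin{equation*}
D=\left(r_{14}^3-r_{12}^3\right)\Bigl[\left(r_{13}^3-r_{12}^3\right)\left(r_{24}^3-r_{14}^3\right)+\left(r_{24}^3-r_{12}^3\right)\left(r_{13}^3-r_{14}^3\right)\Bigr],
\end{equation*}
so there is no factor $r_{13}-r_{24}$ and hence no separate ``rectangle branch'' to treat. By the inequalities \eqref{ineq} (diagonals strictly longer than all sides when the masses are positive) the bracket is strictly positive, so $D=0$ forces $r_{12}=r_{14}$, i.e.\ all four sides equal, which is already the rhombus/square conclusion; your step (ii) on mass positivity then becomes unnecessary. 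Likewise, the claim that the surviving branch forces $\lambda=r_{14}^{-3}$ and hence $m_2=0$ is not correct: with the parallelogram equalities the Dziobeck relation \eqref{eq:dzio-rel} reads $(r_{12}^{-3}-\lambda)^2=(r_{14}^{-3}-\lambda)^2$, whose non-rhombus branch is $\lambda=\tfrac12\bigl(r_{12}^{-3}+r_{14}^{-3}\bigr)$; this branch is excluded because positivity of the mass ratios \eqref{eq:m1m2}--\eqref{eq:m1m4} together with \eqref{ineq} forces $\lambda$ to be strictly smaller than the inverse cube of every side, not because a mass ratio vanishes. With these corrections your approach becomes a valid, self-contained alternative to the paper's argument, trading the ordering of Lemma~\ref{lemomega2} for the explicit factorization above.
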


\begin{proof}
In a parallelogram configuration $r_{12}=r_{34}$ and
$r_{23}=r_{14}$. From Lemma~\ref{lemomega2}, this parallelogram
could be realizable as a central configuration if
$r_{12}=r_{23}=r_{14}=r_{34}$, that is, if it is a rhombus or a
square.
\end{proof}

The next result is an immediate consequence of Proposition
\ref{prop1nn}.

\begin{corollary}
In the planar four--body problem there are no rectangle shape
central configurations with positive masses at their vertices.
\end{corollary}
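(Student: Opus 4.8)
The plan is to deduce the statement directly from Proposition~\ref{prop1nn} by observing that a rectangle is a special kind of parallelogram. A rectangle with positive masses at its vertices is in particular a parallelogram configuration, so by Proposition~\ref{prop1nn} the only way it could be realized as a central configuration is if it is a square or a (non-square) rhombus. But a rhombus that is also a rectangle must be a square (all four sides equal and all four angles right), and this is the only overlap. Hence the only candidate rectangle is the square.

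The remaining point is to rule out the square, and here I would invoke the same mechanism already used in the proof of Proposition~\ref{prop1nn}, namely Lemma~\ref{lemomega2} together with the labelling conventions. In a rectangle ordered sequentially as in Figure~\ref{fig1} one has $r_{12}=r_{34}$ and $r_{23}=r_{14}$, and in the square case additionally $r_{12}=r_{23}$; the diagonals satisfy $r_{13}=r_{24}$. The set $\widetilde{\Omega}$ of Lemma~\ref{lemomega} requires $r_{13}>r_{12}$, which for a square is compatible, but one must then check whether the Dziobeck equation $D=0$ is satisfied and whether the masses given by \eqref{eq:fin12}--\eqref{eq:fin14} (or \eqref{eq:squa} in the degenerate branch) are positive. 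For the square, symmetry forces all four masses to be equal, and the square of equal masses is a well-known central configuration; so strictly speaking the statement must be read as \emph{excluding squares as well}, exactly as Proposition~\ref{prop1nn} does.

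Thus the cleanest formulation of the proof is: a rectangle is a parallelogram, and by Proposition~\ref{prop1nn} a parallelogram central configuration must be a square or a rhombus; the only rectangle among squares and rhombi is the square itself; therefore, excluding the square, there are no rectangle central configurations. If the intended reading of the corollary already tacitly excludes the square (as the phrasing ``immediate consequence'' suggests, mirroring the parent proposition), then the one-line argument ``a rectangle that is a parallelogram but neither a square nor a rhombus does not exist as a central configuration by Proposition~\ref{prop1nn}, and a rectangle is never a non-square rhombus'' suffices.

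I do not anticipate any real obstacle here: the corollary is a genuine corollary, and the entire content is the geometric observation \emph{rectangle $\Rightarrow$ parallelogram} combined with \emph{rectangle $\cap$ rhombus $=$ square}. The only subtlety worth a sentence is the status of the square, which is handled by the same exclusion clause carried over from Proposition~\ref{prop1nn}.
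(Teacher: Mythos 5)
Your proposal is correct and matches the paper's treatment: the paper gives no separate argument, declaring the corollary an immediate consequence of Proposition~\ref{prop1nn}, which is exactly your reduction (a rectangle is a parallelogram, and the only rectangle among squares and rhombi is the square). Your remark that the square must be read as tacitly excluded is consistent with the paper's own phrasing at the start of Section~\ref{no} (``with the exception of the square and the rhombus''), so no gap remains.
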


\begin{proposition}\label{prop1n}
In the planar four--body problem there are no $3$--sides equal
trapezoid shape central configurations with positive masses at their
vertices, excluding the square.
\end{proposition}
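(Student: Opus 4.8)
The plan is to mimic the very short proof of Proposition~\ref{prop1nn}, but instead of collapsing a chain of distance inequalities I will evaluate the Dziobeck polynomial $D$ directly and show that it cannot vanish. By definition a $3$-sides equal trapezoid is an isosceles trapezoid, so I will first use that its two lateral sides are equal. After relabelling so that ${\bf r}\in\widetilde{\Omega}$ (Lemma~\ref{lemomega}), the bases are $r_{12},r_{34}$, the laterals are $r_{14},r_{23}$, the four sides satisfy $r_{12}\ge r_{23}\ge r_{14}\ge r_{34}$, and the diagonals satisfy $r_{24}\ge r_{13}>r_{12}$. Three of these four sides are equal; equal entries in a sorted list occupy consecutive positions, and any triple of equal sides that is not $\{r_{12},r_{23},r_{14}\}$ or $\{r_{23},r_{14},r_{34}\}$ forces all four sides to be equal, i.e.\ the square, which is excluded. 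Hence there are exactly two cases to treat: (A) $r_{12}=r_{23}=r_{14}>r_{34}$, the longer base equal to the laterals, and (B) $r_{12}>r_{23}=r_{14}=r_{34}$, the shorter base equal to the laterals.

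Next I will substitute each case into
\[
D=(r_{13}^3 - r_{12}^3)(r_{23}^3 - r_{34}^3)(r_{24}^3 - r_{14}^3) - (r_{12}^3 - r_{14}^3)(r_{24}^3 - r_{34}^3)(r_{13}^3 - r_{23}^3).
\]
In case (A) the factor $r_{12}^3-r_{14}^3$ vanishes, so $D=(r_{13}^3-r_{12}^3)(r_{23}^3-r_{34}^3)(r_{24}^3-r_{14}^3)$, and each of the three surviving factors is strictly positive by the $\widetilde{\Omega}$-inequalities ($r_{13}>r_{12}$; $r_{23}=r_{14}>r_{34}$; $r_{24}\ge r_{13}>r_{12}=r_{14}$), whence $D>0$. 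In case (B) the factor $r_{23}^3-r_{34}^3$ vanishes, so $D=-(r_{12}^3-r_{14}^3)(r_{24}^3-r_{34}^3)(r_{13}^3-r_{23}^3)$, and again each factor is strictly positive ($r_{12}>r_{23}=r_{14}$; $r_{24}\ge r_{13}>r_{12}>r_{34}$; $r_{13}>r_{12}\ge r_{23}$), whence $D<0$. Either way $D\ne 0$, contradicting the fact recalled in Section~\ref{prelim} that $D=0$ for every planar four-body central configuration; the square escapes this argument precisely because for it the distinguished base is no longer strictly different from the laterals, so the factors above are permitted to vanish. (Alternatively, the same contradiction follows from the mass formula~(\ref{eq:fin13}): in case (A) it yields $m_3=0$, and in case (B) it has a vanishing denominator and a non-vanishing numerator, neither of which is compatible with Lemma~\ref{lemomega2}.)

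Since everything reduces to a sign count, I do not expect a genuinely hard analytic obstacle; the delicate part is the bookkeeping. One must check that cases (A) and (B) really exhaust all $3$-sides equal trapezoids once the bodies are put in $\widetilde{\Omega}$-order, and --- most importantly --- that the inequalities used to sign the three surviving factors of $D$ are \emph{strict} rather than merely $\ge$, since the whole contradiction rests on a single vanishing factor not being cancelled by another one. Confirming the strictness of the relevant inequalities from Lemma~\ref{lemomega} (in particular $r_{13}>r_{12}$ and the consequences of ``not being the square'') is therefore the step I would verify with the most care.
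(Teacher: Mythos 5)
Your proof is correct, and it reaches the contradiction by a slightly different mechanism than the paper. The paper reduces to the same two cases, but before substituting into $D=0$ it also invokes the isosceles property through the equal diagonals $r_{13}=r_{24}=\beta$; this turns $D=0$ into the factorizations $\left(\beta^3-\alpha^3\right)^2\left(\alpha^3-r_{12}^3\right)=0$ and $\left(\beta^3-\alpha^3\right)^2\left(\alpha^3-r_{34}^3\right)=0$, and the extra root $\beta=\alpha$ must then be excluded separately: in one case it would force all six distances to be equal, and in the other it forces $r_{24}=r_{13}=r_{12}=r_{23}=r_{14}>r_{34}$, a degenerate configuration with $m_3$ and $m_4$ colliding, which the paper disposes of by showing $m_3=m_4=0$ via \eqref{eq:fin13} and \eqref{eq:fin14}. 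You never use the equal diagonals; instead you note that three equal sides annihilate exactly one of the two products in $D$ and you sign the surviving product using the strict inequality $r_{13}>r_{12}$ (together with $r_{24}\geq r_{13}$) built into $\widetilde{\Omega}$, so $D\ne 0$ outright and no degenerate subcase ever appears. What your route buys is brevity and the elimination of the $\beta=\alpha$ discussion; what it leans on more heavily is Lemma~\ref{lemomega}, i.e.\ the MacMillan--Bartky strict inequality between diagonals and sides, which holds precisely because the masses are assumed positive --- the point you rightly flag as needing care. Your case bookkeeping (only the two consecutive triples in the sorted list of sides, all four equal giving the square) is sound. The one weak spot is the parenthetical alternative via \eqref{eq:fin13}: in your case (B) its denominator vanishes, so the formula simply does not apply there, and ``incompatible with Lemma~\ref{lemomega2}'' is not by itself an argument; since this is offered only as an aside, it does not affect the validity of the main proof.
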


\begin{proof}
A $3$--sides equal trapezoid is in particular an isosceles
trapezoid, so the length of its diagonals are equal. Assume that
$r_{12}$ is the longest exterior side, that the equal sides are
$r_{23} =r_{14}=r_{34}=\alpha <r_{12}$ and that the diagonals are
$r_{24}=r_{13}=\beta$. Then from de Dziobeck  equation $D=0$ we get
$$
\left(\beta ^3-\alpha ^3\right)^2 \left(\alpha ^3-r_{12}^3\right)=0.
$$
So either $r_{12}=r_{23} =r_{14}=r_{34}=\alpha$ and
$r_{24}=r_{13}=\beta$ which corresponds to a square, or
$\beta=\alpha$ which is not possible because it implies
$r_{24}=r_{13}=r_{12}=r_{23} =r_{14}=r_{34}=\alpha$ (see
Lemma~\ref{lemomega2}).

Proceeding in a similar way when the equal sides are $r_{12}
=r_{23}=r_{14}=\alpha >r_{34}$ the Dziobeck equation  becomes
$$
\left(\beta ^3-\alpha ^3\right)^2 \left(\alpha ^3-r_{34}^3\right)=0.
$$
When $r_{34}=\alpha$ we get again the square and when $\beta=\alpha$
we get condition $r_{24}=r_{13}=r_{12}=r_{23} =r_{14}>r_{34}$. This
condition can be satisfied only when the positions of $m_3$ and
$m_4$ coincide and the resulting configuration is an equilateral
triangle. Substituting the above relation into \eqref{eq:fin13} and
\eqref{eq:fin14} we get $m_3=m_4=0$.
\end{proof}

\section{The set of realizable trapezoid central
configurations}\label{si}

In this section we characterize the set of realizable trapezoid
central configurations.

\begin{proposition} \label{proponova}
The boundaries of $\widetilde{\Omega}'$ (see Lemma~\ref{lemomega2})
consist of a square, a curve corresponding to the rhombus, a curve
containing the isosceles trapezoids and a curve corresponding to
degenerate central configurations with $m_4=0$.
\end{proposition}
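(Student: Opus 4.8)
The plan is to pass to Cartesian coordinates adapted to the trapezoid, rewrite both the Dziobeck equation $D=0$ and the chain of inequalities cutting out $\widetilde\Omega$ as explicit conditions in those coordinates, and then identify the boundary of $\widetilde\Omega'$ by going through the inequalities one at a time. Concretely, following the normalization of Section~\ref{equations} I would place the two parallel lines at $x=0$ and $x=1$ and set $\mathbf q_1=(0,0)$, $\mathbf q_2=(0,a)$ with $a>0$, $\mathbf q_4=(1,b)$, $\mathbf q_3=(1,c)$ with $c>b$; up to equivalence and relabelling this parametrizes every trapezoid configuration by a point $(a,b,c)$. The mutual distances become $r_{12}=a$, $r_{34}=c-b$, $r_{13}^2=1+c^2$, $r_{14}^2=1+b^2$, $r_{23}^2=1+(c-a)^2$, $r_{24}^2=1+(a-b)^2$; the chain $r_{24}\ge r_{13}>r_{12}\ge r_{23}\ge r_{14}\ge r_{34}$ becomes a system of polynomial inequalities in $(a,b,c)$, and $\widetilde\Omega'$ is the part of the surface $\{D=0\}$ satisfying them.

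A short estimate with these inequalities shows the resulting region is bounded, so the entire boundary of $\widetilde\Omega'$ is its relative boundary inside $\{D=0\}$, namely the locus where some inequality in the chain becomes an equality; it then suffices to treat the five faces. Three of them are easy. On $\{r_{24}=r_{13}\}$ one gets $c=a-b$, which is exactly the reflection symmetry across the line $y=a/2$ (swapping $\mathbf q_1\leftrightarrow\mathbf q_2$ and $\mathbf q_3\leftrightarrow\mathbf q_4$), so this face is the curve of isosceles trapezoid central configurations. On $\{r_{13}=r_{12}\}$, formula~\eqref{eq:fin14} reduces to $m_4=0$, so this is the curve of degenerate central configurations with $m_4=0$. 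On $\{r_{23}=r_{14}\}$ one has $(c-a)^2=b^2$, i.e. $c=a+b$ (a parallelogram) or $c=a-b$ (an isosceles trapezoid); by Proposition~\ref{prop1nn} a parallelogram central configuration is a square or a rhombus, and the isosceles case has just appeared, so this face contributes nothing new.

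The heart of the proof, and the step I expect to be the main obstacle, is the remaining two faces, where a lateral side meets a base. Substituting $r_{23}=r_{12}$ into $D$ and dividing by the nonvanishing factor $r_{13}^3-r_{12}^3$ leaves $(r_{34}^3-r_{14}^3)(r_{12}^3-r_{24}^3)=0$; since $r_{24}\ge r_{13}>r_{12}$ the second factor cannot vanish, forcing $r_{14}=r_{34}$, and rewriting $r_{23}=r_{12}$ together with $r_{14}=r_{34}$ in the coordinates gives $c=a+b$ and $a^2-b^2=1$, which makes $r_{12}=r_{23}=r_{14}=r_{34}$: a rhombus. Symmetrically, substituting $r_{14}=r_{34}$ into $D$ and dividing by $r_{24}^3-r_{14}^3\neq0$ leaves $(r_{23}^3-r_{12}^3)(r_{13}^3-r_{14}^3)=0$, and since $r_{13}>r_{12}\ge r_{14}$ this forces $r_{23}=r_{12}$, returning to the previous case. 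So both of these faces collapse onto a single curve of rhombi; the real work here is checking that the Dziobeck polynomial genuinely factors this way after each substitution and that the surviving factor is sign-definite under the $\widetilde\Omega$-inequalities, so that lying on such a face forces the rhombus and no fifth boundary component appears.

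Finally I would confirm that the rhombus curve really lies in $\widetilde\Omega'$ and locate the square. When $r_{12}=r_{23}=r_{14}=r_{34}$, two of the three factors in each term of $D$ vanish, so $D\equiv0$ along the whole one-parameter family of rhombi; a short range check places this family inside $\widetilde\Omega$ exactly for common side length from $1$ (the square) to $2/\sqrt3$ (the $60^\circ$ rhombus, where $r_{13}=r_{12}$), the masses from~\eqref{eq:fin12},~\eqref{eq:fin14},~\eqref{eq:squa} being positive in the interior. At the square one additionally has $r_{13}=r_{24}$, so the square is precisely the configuration that is at once a rhombus and an isosceles trapezoid, i.e. the corner where those two boundary curves meet; the remaining corners of the (topologically triangular) boundary are the $60^\circ$ rhombus, joining the rhombus curve to the $m_4=0$ curve, and the degenerate isosceles trapezoid with $r_{12}=r_{13}=r_{24}$, joining the isosceles curve to the $m_4=0$ curve. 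Collecting these cases yields exactly the four boundary pieces asserted, modulo the routine verification that $\widetilde\Omega'$ is a smooth $2$-dimensional surface along these loci.
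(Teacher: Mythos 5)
Your argument is correct and follows essentially the same route as the paper: you take the boundary of $\widetilde{\Omega}'$ to be the faces where the chain of inequalities of Lemma~\ref{lemomega} becomes an equality, factor the Dziobeck polynomial $D$ on each such face, and use Proposition~\ref{prop1nn} together with the equal-diagonal and kite geometry to reduce everything to the square, the rhombus curve, the isosceles trapezoid curve and the $m_4=0$ curve, exactly as in the paper's Cases A--E. The only cosmetic differences are your use of Cartesian coordinates from the outset (the paper introduces them only in the theorem that follows) and your identification of the face $r_{13}=r_{12}$ by reading $m_4=0$ directly from \eqref{eq:fin14} instead of factoring $D$ there; these do not change the substance.
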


\begin{proof}
The possible boundaries of $\widetilde{\Omega}'$ are the sets where
either $r_{24}=r_{13}$, $r_{13}=r_{12}$, $r_{12}=r_{23}$,
$r_{23}=r_{14}$, or $r_{14}=r_{34}$. Next we characterize these
boundaries.

\noindent \emph{Case A: $r_{24}=r_{13}$}. The trapezoids having
equal diagonals are the rectangle, the square and the isosceles
trapezoid. The rectangle is not a realizable central configuration.

\noindent \emph{Case B: $r_{13}=r_{12}$}. After substituting
$r_{13}=r_{12}$ into equation $D=0$ we get the following three
subcases. Note that the configurations coming from this condition
are central configurations of the restricted problem; i.e. with one
or more masses equal to zero.

\begin{itemize}
\item[B.1:] $r_{12}=r_{14}$. This implies  $r_{12}=r_{23}=r_{14}$,
so the masses 
$m_1$, $m_2$, $m_3$ and $m_4$ are located at the vertices of an 
equilateral triangle with $r_{34}=0$.

\item[B.2:] $r_{12}=r_{23}$. In this case $r_{13}=r_{12}=r_{23}$,
this means that the masses $m_1$, $m_2$, and $m_3$ are at the
vertices of an equilateral triangle.

\item[B.3:]  $r_{24}=r_{34}$. This implies $r_{24}=r_{13}=r_{12}=
r_{23}=r_{14}=r_{24}$ which is  not possible.
\end{itemize}

\noindent \emph{Case C: $r_{12}=r_{23}$}. After substituting
$r_{12}=r_{23}$ into equation $D=0$ we get the following subcacases.

\begin{itemize}
\item[C.1:] $r_{13}=r_{23}$. Corresponds to case B.2.

\item[C.2:] $r_{23}=r_{24}$. This implies $r_{24}=r_{13}=r_{12}=r_{23}$,
so it corresponds also to case B.2.

\item[C.3:] $r_{14}=r_{34}$. In this case $r_{12}=r_{23}$ and $r_{14}=r_{34}$,
so the configuration is a kite. Since the configuration must be also
a trapezoid it is necessarily a rhombus.
\end{itemize}

\noindent\emph{Case D: $r_{23}=r_{14}$}. The trapezoids having two
equal sides are the isosceles trapezoid, the rhombus, the square,
the parallelogram and the rectangle. The last two do not correspond
to realizable central configurations (see Proposition~\ref{prop1n}).

\noindent \emph{Case E: $r_{14}=r_{34}$}. After substituting
$r_{14}=r_{34}$ into equation $D=0$ we get the following subcacases.

\begin{itemize}
\item[E.1:] $r_{12}=r_{23}$. Corresponds to case C.3.

\item[E.2:] $r_{13}=r_{34}$. This implies $r_{13}=r_{12}=r_{23}=
r_{14}=r_{34}$, so the configuration is a rhombus.

\item[E.3:]  $r_{24}=r_{34}$. Corresponds to case B.3.
\end{itemize}
\end{proof}

Next we give the shape of the set of realizable trapezoid central
configurations. To simplify the computations we parametrize the set
of realizable central configurations by using the positions of the
masses: $q_1=(0,0), q_2=(0,a), q_3=(1,b), q_4=(1,c)$, with $a\geq 1$
and $b> c$, and substituting the corresponding mutual distances into
the Dziobeck equation $D=0$. This equation gives a relation between
$a,b,c$ which provides an implicit $2$--dimensional surface in
$\mathbb{R}^3$.

\begin{theorem}
Let $q_1=(0,0), q_2=(0,a), q_3=(1,b), q_4=(1,c)$, with $a \geq 1$
and $b>c$, be the positions of the masses $m_1$, $m_2$, $m_3$ and
$m_4$ respectively, then the set $\Omega$ of realizable CC is
$$
\{(a,b,c)\in\mathbb{R}^{3}\ : \ a \geq 1,\ b > c, \ D=0,r_{24}\geq
r_{13}>r_{12}\geq r_{23}\geq r_{14}\geq r_{34}\},
$$
and the boundary of $\Omega$ is $\mathcal{C}_1\cup \mathcal{C}_2\cup
\mathcal{C}_3\cup P_1\cup P_2 \cup P_3$  where
\[
\begin{split}
\mathcal{C}_1&=\{(2/\sqrt{3},1/\sqrt{3},c)\::\:c\in(-1/\sqrt{3},1/\sqrt{3})\},\\
\mathcal{C}_2&=\{\left(\sqrt{1+c^2},c+\sqrt{1+c^2},c\right)\::\:c\in (-1/\sqrt{3},0)\},\\
\mathcal{C}_3&=\{(a,b,c)\ : \ a=b+c, f(b,c)=0, c\in (0,1/\sqrt{3})\},\\
P_1&=(2/\sqrt{3},1/\sqrt{3},1/\sqrt{3}),\\
P_2&=(2/\sqrt{3},1/\sqrt{3},-1/\sqrt{3}),\\
P_3&=(1,1,0),
\end{split}
\]
and
\[
\begin{split}
f(b,c)=& \left((b+c)^3-(c^2+1)^{3/2}\right)
\left((b^2+1)^{3/2}-(b-c)^{3}\right)-\\ &
\left((b^2+1)^{3/2}-(b+c)^3\right)
\left((c^2+1)^{3/2}-(b-c)^3\right).
\end{split}
\]
See Figure~\ref{figomega} for the plot of the set $\Omega$.

The points of $\mathcal{C}_1$ provide configurations of the
restricted problem with $m_1$, $m_2$ and $m_3$ in an equilateral
triangle; the points of $\mathcal{C}_2$ and $\mathcal{C}_3$ provide
rhombus and isosceles trapezoid central configurations,
respectively;  $P_1$ corresponds to an equilateral triangle
configuration of the restricted problem with a collision of $m_3$
and $m_4$ at one of the vertices;  $P_2$ corresponds to a
configuration of the restricted problem with the masses at the
vertices of a rhombus and such that the positions of $m_1$, $m_2$
and $m_3$ are the vertices of an equilateral triangle; and $P_3$
corresponds to the square central configuration.
\end{theorem}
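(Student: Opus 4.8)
The plan is to carry everything into the coordinates $(a,b,c)$, deduce the first assertion directly from Lemma~\ref{lemomega2}, and then compute the boundary of the surface $\Omega$. First I would record the six squared mutual distances, $r_{12}^2=a^2$, $r_{34}^2=(b-c)^2$, $r_{13}^2=1+b^2$, $r_{14}^2=1+c^2$, $r_{23}^2=1+(b-a)^2$, $r_{24}^2=1+(c-a)^2$. After the relabelling of Lemma~\ref{lemomega} and a rescaling making the distance between the parallel lines equal to $1$, every trapezoid central configuration can be placed as in Figure~\ref{fig1} with $m_1$ at the origin, $m_2=(0,a)$, $m_3=(1,b)$, $m_4=(1,c)$; here $b>c$ just says $m_3$ is above $m_4$, and $a\geq1$ holds because $r_{12}$ is the longest side and $r_{12}\geq r_{23}\geq1$. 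Under this placement the chain $r_{24}\geq r_{13}>r_{12}\geq r_{23}\geq r_{14}\geq r_{34}$ together with $D=0$ is precisely the condition $\mathbf r\in\widetilde{\Omega}'$, so Lemma~\ref{lemomega2} gives both inclusions: the stated set is exactly the set of realizable trapezoid central configurations, and nothing is lost up to relabelling and rescaling. I would also check, via the implicit function theorem at an interior sample point, that $\{D=0\}$ is a smooth surface near $\Omega$, so $\Omega$ is a genuine two--dimensional region whose boundary (relative to that surface) is a finite union of arcs.

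I would then obtain that boundary as the union, over the defining inequalities, of the loci where one of them becomes an equality, discarding non--realizable pieces with Proposition~\ref{proponova}. The equality $r_{13}=r_{12}$ reads $a=\sqrt{1+b^2}$; imposing $D=0$ and applying Case~B of Proposition~\ref{proponova}, the only realizable component is subcase~B.2, which gives $a=2/\sqrt3$, $b=1/\sqrt3$ with $c$ free, i.e.\ the line $\mathcal{C}_1$; there $r_{13}=r_{12}=r_{23}$ so $m_1,m_2,m_3$ form an equilateral triangle and $m_4=0$ by~\eqref{eq:fin14}. The equality $r_{24}=r_{13}$ forces $a=b+c$ (the other sign gives $a<0$), and substituting this into the definition of $D$ produces the factorization $D=-\bigl((1+b^2)^{3/2}-(1+c^2)^{3/2}\bigr)f(b,c)$, so $D=0$ is equivalent to $f(b,c)=0$: this is the isosceles trapezoid curve $\mathcal{C}_3$ (the ``isosceles'' branch $a=b+c$ of $r_{23}=r_{14}$ yields the same locus). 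The remaining active equalities --- $r_{12}=r_{23}$, $r_{14}=r_{34}$, and the ``parallelogram'' branch $a=b-c$ of $r_{23}=r_{14}$ --- contribute, via Cases~C and~E of Proposition~\ref{proponova} and Propositions~\ref{prop1nn}--\ref{prop1n}, only the line $\mathcal{C}_1$ already found and the rhombus curve $\mathcal{C}_2$, characterized by $a^2=1+c^2$, $b=a+c$, on which $D=0$ holds identically. Finally $a=1$ forces $r_{12}=r_{23}=r_{14}=r_{34}=1$, i.e.\ the square $P_3$, and $b=c$ gives $r_{34}=0$, a collision occurring in $\overline{\Omega}$ only at $P_1$.

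It remains to pin down the parameter ranges and the corner points by determining which still--inactive inequality first turns into an equality along each arc. Along $\mathcal{C}_1$ one finds $r_{23}\geq r_{14}\iff c^2\leq1/3$ and $r_{14}\geq r_{34}\iff c\geq-1/\sqrt3$, so $c\in(-1/\sqrt3,1/\sqrt3)$ with endpoints $c=1/\sqrt3$ (there $r_{34}=0$: the collision point $P_1$, shared with $\mathcal{C}_3$) and $c=-1/\sqrt3$ (there $r_{14}=r_{34}$ and the configuration is a rhombus: the point $P_2$, shared with $\mathcal{C}_2$). Along $\mathcal{C}_2$ one computes $r_{24}^2-r_{13}^2=-4c\sqrt{1+c^2}$ and $r_{13}^2-r_{12}^2=b^2-c^2$, so $r_{24}\geq r_{13}\iff c\leq0$ and $r_{13}>r_{12}\iff c>-1/\sqrt3$, giving $c\in(-1/\sqrt3,0)$ with endpoints the square $P_3$ at $c=0$ (where $a=1$) and $P_2$ at $c=-1/\sqrt3$. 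Along $\mathcal{C}_3$, where $a=b+c$ and $f(b,c)=0$, the inequalities $a\geq1$, $r_{12}\geq r_{23}$ (which is $b(b+2c)\geq1$) and $r_{13}>r_{12}$ (which is $c(2b+c)<1$) confine $c$ to $(0,1/\sqrt3)$; the endpoint $c=0$ forces $b=1$ (the square $P_3$) and $c=1/\sqrt3$ forces $b=1/\sqrt3$, hence $a=2/\sqrt3$ (the point $P_1$). The geometric descriptions of the three curves and three points claimed in the statement then read off from the algebraic relations just obtained.

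I expect the main difficulty to be the analysis of $\mathcal{C}_3$: one has to show that $f(b,c)=0$ determines, for each $c\in(0,1/\sqrt3)$, exactly one admissible value of $b$, that the resulting arc is connected with precisely the endpoints $P_3$ and $P_1$, and that every remaining inequality of $\widetilde{\Omega}$ holds along it, with equality only at those endpoints. A secondary point needing care is verifying that $\Omega$ really has nonempty two--dimensional interior and that no interior inequality of the chain can be active along a curve other than $\mathcal{C}_1,\mathcal{C}_2,\mathcal{C}_3$; this last point is exactly the casework performed in Proposition~\ref{proponova}, which is why the argument above leans on it so heavily.
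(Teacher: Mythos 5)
Your route is the same as the paper's: parametrize by $(a,b,c)$, deduce the first assertion from Lemma~\ref{lemomega2}, and read the boundary off the case analysis of Proposition~\ref{proponova}; your computations for $\mathcal{C}_1$, $\mathcal{C}_2$ and the corner points agree with the paper, and the factorization $D=-\bigl((1+b^2)^{3/2}-(1+c^2)^{3/2}\bigr)f(b,c)$ on the slice $a=b+c$ is correct. The genuine gap is the one you yourself flag and then leave open: you never show that, inside the admissible region, $\{f(b,c)=0\}$ is a single arc, a graph $b(c)$ over $c\in(0,1/\sqrt{3})$, connected and terminating exactly at $P_3$ and $P_1$. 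Without that, the claimed description of the boundary of $\Omega$ is not established: a priori the isosceles locus could have several components in the strip, or fail to reach one of those endpoints, and then $\partial\Omega$ would not be the stated union. The paper does not prove this fact from scratch either; it closes the argument by invoking the theorem of Cors and Roberts \cite{cors} that the isosceles trapezoid central configurations form a unique one--parameter family, given by a differentiable function of the parameter, with endpoints the square and the equilateral triangle with two vanishing masses, and then translates that statement into the present coordinates as the implicit curve $f(b,c)=0$ joining $P_1$ to $P_3$. To complete your proof you must either cite that result, as the paper does, or prove the corresponding statement about $f$ directly (for instance by a sign or monotonicity analysis of $f(\cdot,c)$ on the admissible $b$--interval for each fixed $c$); the sentence ``I expect the main difficulty to be\dots'' cannot remain in the final argument.

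A smaller inaccuracy: on $\mathcal{C}_3$ the three inequalities you invoke, $a\geq 1$, $b(b+2c)\geq 1$ (i.e.\ $r_{12}\geq r_{23}$) and $c(2b+c)<1$ (i.e.\ $r_{13}>r_{12}$), do not by themselves confine $c$ to $(0,1/\sqrt{3})$: the point $(b,c)=(2,-1/2)$ satisfies all three with $a=3/2$. You must also use $r_{14}\geq r_{34}$, i.e.\ $1+c^2\geq (b-c)^2$, equivalently $b(b-2c)\leq 1$, which combined with $b(b+2c)\geq 1$ excludes $c<0$ and forces $b=1$ when $c=0$; the upper bound $c<1/\sqrt{3}$ then does follow from $c(2b+c)<1$ and $b>c$. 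With these two points repaired, your argument matches the paper's proof.
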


\begin{proof}
Easily we can compute $r_{24}=\sqrt{1+(a-c)^2}$,
$r_{13}=\sqrt{1+b^2}$, $r_{12}=a$, $r_{23}=\sqrt{1+(a-b)^2}$,
$r_{14}=\sqrt{1+c^2}$, and $r_{34}=b-c$. Proposition~\ref{proponova}
gives the characterization of the central configurations on the
boundaries of $\Omega$. We prove the result by using  this
characterization and the parametrization $(a,b,c)$.

On the boundary with the masses $m_1$, $m_2$ and $m_3$ at the
vertices of an equilateral triangle we have $r_{13}=r_{12}=r_{23}$.
Solving the system of equations $r_{13}=r_{12}=r_{23}$, we get
$a=2/\sqrt{3}$ and $b=1/\sqrt{3}$. Substituting this solution into
$r_{23}$ and $r_{14}$ and imposing the condition $r_{23}\geq
r_{14}$, we get the condition $-1/\sqrt{3}\le c\le 1/\sqrt{3}$. It
is easy to check that the solution $(a,b,c)=(2/\sqrt{3},
1/\sqrt{3},c)$ with $c\in [-1/\sqrt{3}, 1/\sqrt{3}]$ satisfies
$r_{24}\geq r_{13} \geq r_{12} \geq r_{23}\geq r_{14}\geq r_{34}$.
So the set $\mathcal{C}_1\cup P_1\cup P_2$ belongs to the boundary
of $\Omega$. Moreover it is easy to check that the point $P_1$
correspond to an equilateral triangle configuration with the masses
$m_3$ and $m_4$ colliding at the corresponding vertex of the
triangle; and the point $P_2$ corresponds to a rhombus configuration
such that $m_1$, $m_2$ and $m_3$ are at the vertices of an
equilateral triangle.

On the rhombus configurations we have $r_{12}=r_{23}=r_{14}=r_{24}$.
Solving the system of equations $r_{12}=r_{23}=r_{14}=r_{24}$ we get
the solution $s(c)=(a,b)=(\sqrt{1+c^2},c+\sqrt{1+c^2})$. Imposing
that this solution satisfies $r_{24}\geq r_{13}>r_{12}$ we get the
condition $-1/\sqrt{3}< c\le 0$. So $\mathcal{C}_2$ belongs to the
boundary of $\Omega$. Moreover  $s(0)=(1,1)$ and
$s(-1/\sqrt{3})=(2/\sqrt{3},1/\sqrt{3})$. So the endpoints of
$\mathcal{C}_2$ are $P_2$ and $P_3$.

On the isosceles trapezoid, configurations $(a,b,c)$ are such that
$r_{23}=r_{14}$, $r_{24}=r_{13}$ and $D=0$. If $r_{23}=r_{14}$ and
$r_{24}=r_{13}$, then the Dziobeck equation $D=0$ becomes
$$
\left(r_{14}^3-r_{13}^3\right) \left(\left(r_{12}^3-r_{14}^3\right)
\left(r_{13}^3-r_{34}^3\right)-\left(r_{13}^3-r_{12}^3\right)
\left(r_{14}^3-r_{34}^3\right)\right)=0.
$$
If $r_{14}=r_{13}$, then $r_{24}=r_{13}=r_{12}=r_{23}=r_{14}$. This
corresponds to the point $P_1$ (see the proof of
Proposition~\ref{prop1n}). Assume now that $r_{14}\ne r_{13}$.
Solving system $r_{23}=r_{14}$, $r_{24}=r_{13}$ we get $a=b+c$. So
$r_{24}=r_{13}=\sqrt{b^2+1}$, $r_{12}= b+c$,
$r_{23}=r_{14}=\sqrt{c^2+1}$, and  $r_{34}=b-c$ and condition $D=0$
is equivalent to condition  $f(b,c)=0$. Cors and Roberts in
\cite{cors}, using a different parametrization, proved the existence
of a unique one-parameter family of isosceles trapezoid central
configurations which is characterized by a differentiable function
of one of the parameters in the parameter space. Moreover they prove
that the endpoints of this family are the square configuration and
the configuration consisting of an equilateral triangle with the
masses $m_3=m_4=0$ at one of the vertices. In our parametrization
the differentiable function of the parameter is the function $b(c)$
defined implicitly by $f(b,c)=0$ and the endpoints of the curve are
the points $P_1$ and $P_3$. Thus $\mathcal{C}_3$ is the last curve
in the boundary of $\Omega$ and it is a curve joining $P_1$ and
$P_3$, see Figure~\ref{figomega}.
\end{proof}

\begin{figure}
\unitlength=1cm
\begin{center}
\includegraphics[width=7cm]{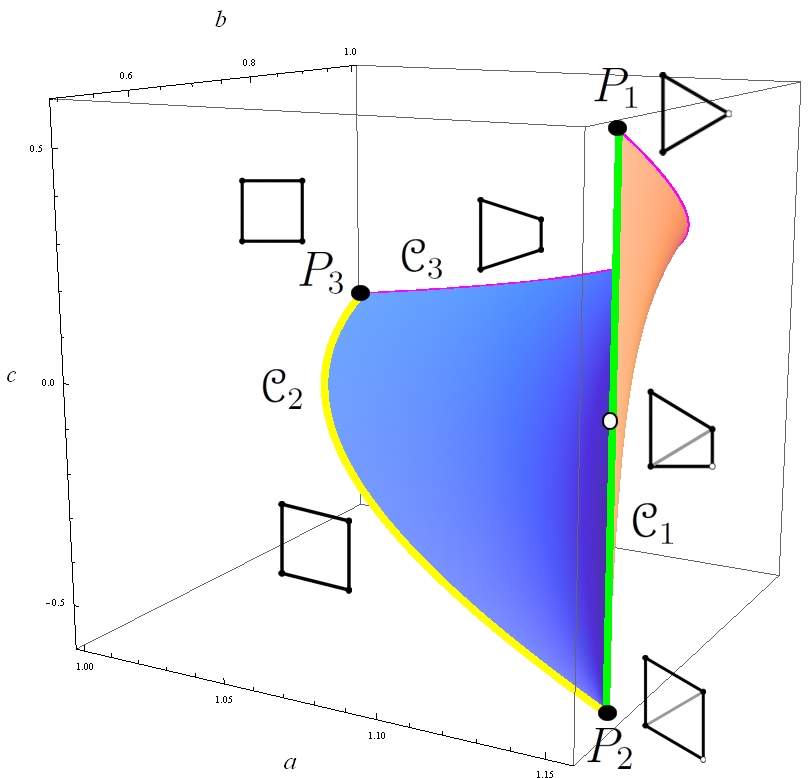}
\end{center}
\caption{The surface $\Omega$ of the trapezoid central
configurations in the $abc$--space.} \label{figomega}
\end{figure}

Unfortunately we are not able to prove that $\Omega$ is a
differentiable function over the two positions of the masses, as was
stablish in the co--circular case, see \cite{cors}. Nevertheless, in
the next section we prove that there exist a one--parameter family
of trapezoid central configurations that divides $\Omega$ in two
disjoint regions, namely the region that contains the trapezoid
central configurations and the one that contains the obtuse
trapezoid central configurations.

\section{The right trapezoid family}\label{right}

We suppose again that we are in the hypotheses of
Lemma~\ref{lemomega}; i.e. that  $r_{12}\geqslant r_{23}\geqslant
r_{14}\geqslant r_{34}$. We assume also that the position of the
masses $m_1=1, m_2, m_3$ and $m_4$ are respectively $(0,0), (0,a),
(1,b), (1,0)$ with $a\geqslant 1$ and  $a\geqslant b>0$. Easily we
can compute the mutual distances $r_{12}=a, r_{23} = \sqrt{1 +
(a-b)^2}, r_{14}=1, r_{34} = b,  $ and $ r_{13}=\sqrt{1+b^2}$ and
$r_{24} =\sqrt{1+a^2}$ (the diagonals).

First we give the set $\widetilde{\Omega}$ (see
Lemma~\ref{lemomega}) on the right trapezoid family parameterized by
the positions of the masses $(a,b)$. It is obvious that condition
$r_{24}>r_{12}$ is always satisfied and that condition
$r_{34}\leqslant r_{14}$ implies $b\leqslant 1$. On the other hand,
it is easy to see that conditions $r_{13}> r_{12}$ and
$r_{12}\geqslant r_{23}$ imply that $a< a_1(b)= \sqrt{1+b^2}$ and
$a\geqslant a_2(b)=(b^2+1)/(2b)$ respectively. From here we get
condition $a_1(b)> a_2(b)$ which is satisfied for $b> 1/\sqrt{3}$.
In short, the set $\widetilde{\Omega}$ (see Lemma~\ref{lemomega}) on
the right trapezoid family is
$$
\Omega_r=\left\{(a,b,c)\in\mathbb{R}^3\::\:\frac{b^2+1}{2b}\leqslant
a< \sqrt{1+b^2}, \quad \frac{1}{\sqrt{3}}< b\leqslant 1,
\quad c=0\right\}.
$$
We can see that $a_1$ is a decreasing function in
$b\in(1/\sqrt{3},1]$ with
 $a_1(1/\sqrt{3})=2/\sqrt{3}$ and $a_1(1)=1$, whereas $a_2$ is an
increasing function in $b\in(1/\sqrt{3},1]$ with
$a_2(1/\sqrt{3})=2/\sqrt{3}$ and $a_2(1)=\sqrt{2}$ (see
Figure~\ref{figrt}). Therefore $a\in(1,\sqrt{2})$.

\begin{figure}
\includegraphics[width=5cm]{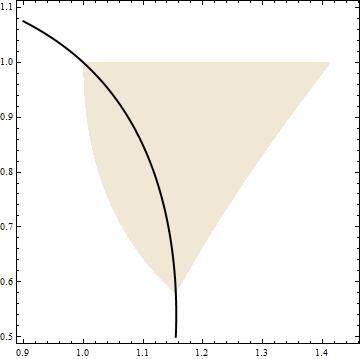} \hspace*{0.5cm}
\includegraphics[width=6.5cm]{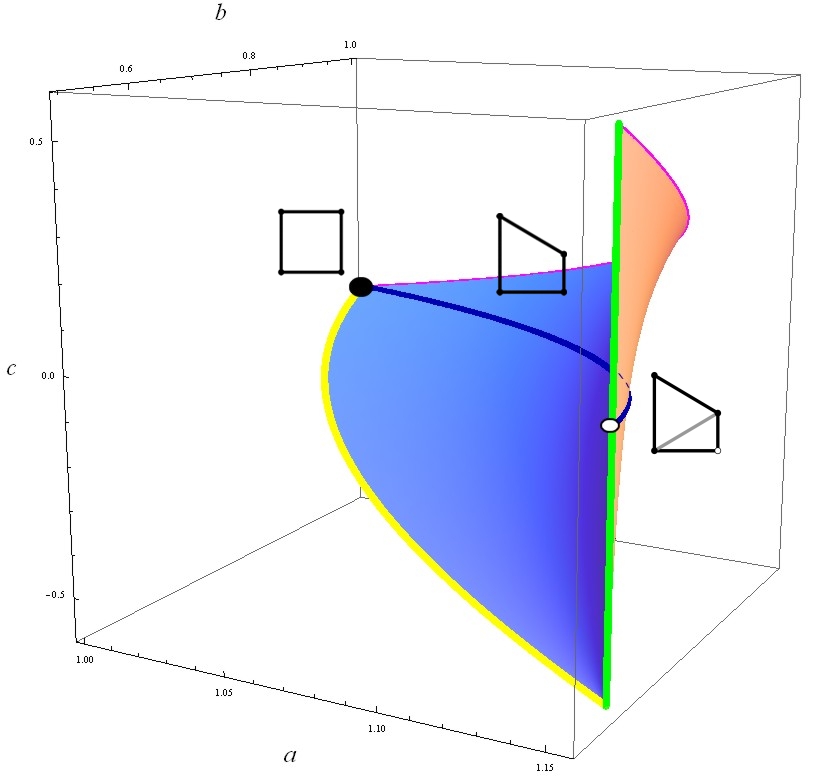}\\
(a) \hspace*{6cm} (b) \caption{(a) The region $\Omega_r$ and the the
curve $D=0$ for the right trapezoid family in the $a-b$ plane. (b)
The the curve $D=0$ for the right trapezoid family on the region
$\Omega$.} \label{figrt}
\end{figure}

\begin{theorem}
\label{theort} The curve $D=0$ is a graph with respect to the
variable $a$ in the region $\Omega_r$ (see Figure~\ref{figrt}). In
fact, $\partial D/\partial a$ evaluated at the curve $D=0$
restricted to $\Omega_r$ is negative.
\end{theorem}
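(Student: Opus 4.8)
The plan is to substitute the explicit mutual distances
$r_{12}=a$, $r_{13}=\sqrt{1+b^2}$, $r_{14}=1$, $r_{23}=\sqrt{1+(a-b)^2}$, $r_{24}=\sqrt{1+a^2}$, $r_{34}=b$ into the factored Dziobeck polynomial
$D=(r_{13}^3-r_{12}^3)(r_{23}^3-r_{34}^3)(r_{24}^3-r_{14}^3)-(r_{12}^3-r_{14}^3)(r_{24}^3-r_{34}^3)(r_{13}^3-r_{23}^3)$, treat it as a function $D(a,b)$ on the closed region $\Omega_r$, and show that $\partial D/\partial a<0$ at every point of $\Omega_r$ where $D=0$. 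Once that is established, the implicit function theorem gives that the zero set $\{D=0\}\cap\Omega_r$ is locally a graph $a=a(b)$, and negativity of $\partial D/\partial a$ throughout means the level set cannot turn back, so it is globally a graph over the $b$-interval $(1/\sqrt3,1]$; this is exactly the assertion of Theorem~\ref{theort}.

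The first concrete step is to understand the sign of $D$ itself on the two arcs bounding $\Omega_r$ in the $a$-direction, namely $a=a_2(b)=(b^2+1)/(2b)$ (where $r_{12}=r_{23}$, forcing $r_{13}^3-r_{23}^3>0$ and $r_{12}^3-r_{14}^3>0$ so the second product is positive while the first vanishes, giving $D<0$) and $a=a_1(b)=\sqrt{1+b^2}$ (where $r_{13}=r_{12}$, so the first product vanishes and the second is again positive, giving $D<0$ — but wait, one must check $r_{24}-r_{34}$ and $r_{13}-r_{23}$ signs carefully here). Actually the cleaner route, and the one I would pursue, is to differentiate directly. Since $\partial r_{12}/\partial a$, $\partial r_{23}/\partial a$, $\partial r_{24}/\partial a$ are the only nonzero $a$-derivatives among the six distances (with $b,c$ fixed), one computes
\[
\frac{\partial D}{\partial a}
= \frac{\partial}{\partial a}\!\left[(r_{13}^3-r_{12}^3)(r_{23}^3-r_{34}^3)(r_{24}^3-r_{14}^3)\right]
-\frac{\partial}{\partial a}\!\left[(r_{12}^3-r_{14}^3)(r_{24}^3-r_{34}^3)(r_{13}^3-r_{23}^3)\right],
\]
using $\partial(r_{12}^3)/\partial a=3a^2$, $\partial(r_{23}^3)/\partial a=3(a-b)\sqrt{1+(a-b)^2}$, $\partial(r_{24}^3)/\partial a=3a\sqrt{1+a^2}$.

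The main obstacle is that the resulting expression for $\partial D/\partial a$ is a messy polynomial-plus-radical function of $a,b$, and showing it is strictly negative on the curvilinear triangle $\Omega_r$ is not a one-line inequality. My strategy to tame it: first use $D=0$ to eliminate one of the three "big" factors — for instance solve $D=0$ for $r_{23}^3-r_{34}^3$ (legitimate since that factor is nonzero on the interior of $\Omega_r$) and substitute, so that $\partial D/\partial a|_{D=0}$ collapses to a product of the remaining factors times a lower-degree correction term whose sign is easier to read off. On $\Omega_r$ we know the chain of inequalities $r_{24}\ge r_{13}>r_{12}\ge r_{23}\ge r_{14}\ge r_{34}$, equivalently $1/\sqrt3<b\le1$, $1<a<\sqrt2$, $a\ge b$; all six "difference" factors then have definite, known signs, and $a-b>0$ controls the sign of the $r_{23}$-derivative term. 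I expect the remaining inequality to reduce, after clearing the (positive) denominators and grouping, to a statement that a certain polynomial in $a,b$ with a single dominant term is negative on the box $1<a<\sqrt2$, $1/\sqrt3<b\le 1$ — verifiable either by an explicit estimate term-by-term or, if needed, by noting it is monotone in $a$. It is worth also checking the two degenerate corner points $(a,b)=(2/\sqrt3,1/\sqrt3)$ and $(a,b)=(1,1)$ separately, since $\Omega_r$ is not open there; at these points the configuration degenerates (equilateral triangle resp. square) and $D=0$ meets the boundary, so one confirms the graph property extends by continuity.
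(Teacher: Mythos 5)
Your overall strategy is the same as the paper's: differentiate $D$ with respect to $a$ through the three distances $r_{12},r_{23},r_{24}$, use the relation $D=0$ to eliminate one of the triple products, and then try to read off the sign of the result from the ordering $r_{24}\ge r_{13}>r_{12}\ge r_{23}\ge r_{14}\ge r_{34}$ valid on $\Omega_r$. The genuine gap is in the step you describe as ``I expect the remaining inequality to reduce \dots to a polynomial with a single dominant term, verifiable by a term-by-term estimate or monotonicity.'' This is exactly where the difficulty sits, and the simple sign bookkeeping does not close it. In the paper's grouping $\partial D/\partial a=3(f_1+\cdots+f_6)$, two of the three pairs ($f_1+f_4$ and $f_2+f_5$) do succumb to the substitution-plus-ordering argument, because after using $D=0$ their coefficients $g_1,g_2$ have a fixed sign on $\Omega_r$. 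But the third pair $f_3+f_6$ carries a coefficient
$g_3=-r_{24}r_{12}(r_{23}^3-r_{34}^3)+r_{23}(r_{12}-r_{34})(r_{24}^3-r_{34}^3)$,
and $g_3$ \emph{changes sign along a curve inside} $\Omega_r$; no inequality drawn from the distance ordering, and no single dominant monomial on the box $1<a<\sqrt2$, $1/\sqrt3<b\le 1$, settles its sign. To finish, the paper has to prove that $\{D=0\}$ and $\{g_3=0\}$ do not intersect in the interior of $\Omega_r$, which it does by eliminating $r_{13},r_{23},r_{24}$ with resultants, producing polynomials of total degree $64$ and $16$ in $(a,b)$, taking further resultants whose nontrivial factors have degrees up to $214$, isolating their real roots (partly numerically) in the relevant intervals, and checking all candidate pairs; only then does a single boundary evaluation give $g_3\le 0$ on $\{D=0\}\cap\Omega_r$. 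Your proposal contains no substitute for this elimination step, so the central inequality $\partial D/\partial a<0$ is not actually established.

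Two smaller points. First, negativity of $\partial D/\partial a$ on the zero set gives at most one zero of $D$ on each vertical segment $\{b=\mathrm{const}\}\cap\Omega_r$; to get a graph over the whole interval you still need existence, i.e.\ a sign change of $D$ between $a=a_2(b)$ and $a=a_1(b)$, which the paper proves (it finds $D>0$ on $a=a_2(b)$ for $b\in(1/\sqrt3,1)$ and $D<0$ on $a=a_1(b)$). You raise this check but then set it aside, and your sign analysis there is off: on $a=a_2(b)$ one has $r_{12}=r_{23}$, where \emph{no} factor of the first product vanishes (it is on $a=a_1(b)$, where $r_{13}=r_{12}$, that the first product vanishes), so your tentative conclusion $D<0$ on $a=a_2(b)$ contradicts what actually happens. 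Second, the corner treatment you mention is fine in spirit, but it does not compensate for the missing interior argument about $g_3$.
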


\begin{proof}
When $b=1$  equation $D=0$ becomes
$$
\left(1-2 \sqrt{2}\right) \left(\left(a^2+1\right)^{3/2}-1\right)
\left(a^3-\left(a^2-2 a+2\right)^{3/2}\right)=0,
$$
which has a unique real solution with $a\ge 1$, the solution $a=1$.
After straightforward computations we see that substituting
$a=a_2(b)$ into $D$ we get a function of $b$ that is zero at $b=1$
and $b=1/\sqrt{3}$ and positive for $b\in(1/\sqrt{3},1)$. In a
similar way substituting $a=a_1(b)$ into $D$ we get a function of
$b$ that is zero at $b=0$ and $b=1/\sqrt{3}$ and negative for
$b\in(1/\sqrt{3},1]$. Therefore each curve in $\Omega_r$ joining a
point of the curve $a_1(b)$ with a point of the curve $a_2(b)$ has
at least a point with $D=0$. Therefore there exist al least one set
in $\Omega_r$  satisfying $D=0$. Next we see that this set is a
graph in the variable $a$ that joins the points $(1,1)$ and
$(2/\sqrt{3},1/\sqrt{3})$, see Figure~\ref{figrt}(a).

By simple computations we get
$$
\frac{\partial D}{\partial a}=\frac{\partial D}{\partial
r_{12}}\frac{\partial r_{12}}{\partial a}+\frac{\partial D}{\partial
r_{23}}\frac{\partial r_{23}}{\partial a}+\frac{\partial D}{\partial
r_{24}}\frac{\partial r_{24}}{\partial a},
$$
where
\begin{eqnarray*}
\frac{\partial r_{12}}{\partial a}&=& 1,\\
\frac{\partial r_{23}}{\partial a}&=&\frac{a-b}{r_{23}}=\frac{r_{12}-r_{34}}{r_{23}},\\
\frac{\partial r_{24}}{\partial
a}&=&\frac{a}{r_{24}}=\frac{r_{12}}{r_{24}},
\end{eqnarray*}
and
\begin{eqnarray*}
\frac{\partial D}{\partial r_{12}}&=&-3 r_{12}^2
\left(r_{13}^3-r_{23}^3\right) \left(r_{24}^3-r_{34}^3\right)-3
r_{12}^2 \left(r_{24}^3-r_{14}^3\right)
\left(r_{23}^3-r_{34}^3\right),\\
\frac{\partial D}{\partial r_{23}}&=&3 r_{23}^2
\left(r_{13}^3-r_{12}^3\right) \left(r_{24}^3-r_{14}^3\right)+3
r_{23}^2 \left(r_{12}^3-r_{14}^3\right)
\left(r_{24}^3-r_{34}^3\right),\\
\frac{\partial D}{\partial r_{24}}&=&3 r_{24}^2
\left(r_{13}^3-r_{12}^3\right) \left(r_{23}^3-r_{34}^3\right)-3
r_{24}^2 \left(r_{12}^3-r_{14}^3\right)
\left(r_{13}^3-r_{23}^3\right).
\end{eqnarray*}
Rearranging the terms in a convenient way $\partial D/\partial a$
can be written in terms of the mutual distances as
$$
\frac{\partial D}{\partial a}=3(f_1+f_2+f_3+f_4+f_5+f_6)
$$
where
\begin{eqnarray*}
f_1&=&-r_{12}^2 \left(r_{24}^3-r_{14}^3\right)
\left(r_{23}^3-r_{34}^3\right),\\
f_2&=&-r_{12}^2 \left(r_{13}^3-r_{23}^3\right)
\left(r_{24}^3-r_{34}^3\right),\\
f_3&=&r_{23} (r_{12}-r_{34}) \left(r_{13}^3-r_{12}^3\right)
\left(r_{24}^3-r_{14}^3\right),\\
f_4&=&r_{23} (r_{12}-r_{34}) \left(r_{12}^3-r_{14}^3\right)
\left(r_{24}^3-r_{34}^3\right),\\
f_5&=&r_{12} r_{24} \left(r_{13}^3-r_{12}^3\right)
\left(r_{23}^3-r_{34}^3\right),\\
f_6&=&-r_{12} r_{24} \left(r_{12}^3-r_{14}^3\right)
\left(r_{13}^3-r_{23}^3\right).
\end{eqnarray*}
Next, we will see that at the points of $\Omega_r$ satisfying
$D=0$ the following conditions hold: $f_1+f_4<0$,
$f_2+f_5<0$ and $f_3+f_6\leqslant 0$. Therefore $\partial D/\partial
a$ evaluated at the curve $D=0$ restricted to $\Omega_r$ is
negative.

{From} $D=0$ we get
$$
\left(r_{24}^3-r_{14}^3\right)
\left(r_{23}^3-r_{34}^3\right)=\frac{\left(r_{12}^3-r_{14}^3\right)
\left(r_{13}^3-r_{23}^3\right)
\left(r_{24}^3-r_{34}^3\right)}{r_{13}^3-r_{12}^3}.
$$
Then  $f_1+f_4$ can be written as
$$
f_1+f_4= \frac{g_1 \left(r_{12}^3-r_{14}^3\right)
\left(r_{24}^3-r_{34}^3\right)}{r_{13}^3-r_{12}^3},
$$
where
$$
g_1=-r_{34} r_{23} \left(r_{13}^3-r_{12}^3\right)-r_{12}
(r_{12}-r_{23})
   \left(r_{12} r_{23}
   (r_{12}+r_{23})+r_{13}^3\right).
  $$
Since $r_{13}>r_{12}\geqslant r_{23}$ on $\Omega_r$ and $g_1<0$ on
$\Omega_r$, it follows that $f_1+f_4<0$ evaluated at the curve $D=0$
restricted to $\Omega_r$.

Proceeding as above, using that $D=0$ expression $f_2+f_5$ can be
written as
$$
f_2+f_5=\frac{g_2 \left(r_{13}^3-r_{12}^3\right)
\left(r_{23}^3-r_{34}^3\right)}{r_{12}^3-r_{14}^3},
$$
where
$$
g_2=-r_{12}(r_{24}-r_{12})(r_{14}^3+r_{12}^2r_{24}+r_{12}r_{24}^2).
$$
Since $r_{24}>r_{12}$ on $\Omega_r$,  $g_2<0$ on $\Omega_r$, we
obtain that $f_2+f_5<0$ evaluated at the curve $D=0$ restricted to
$\Omega_r$.

Finally, using that $D=0$ expression $f_3+f_6$ can be written as
$$
f_3+f_6=\frac{g_3 \left(r_{13}^3-r_{12}^3\right)
\left(r_{24}^3-r_{14}^3\right)}{r_{24}^3-r_{34}^3},
$$
where
$$
g_3=-r_{24}r_{12}(r_{23}^3 - r_{34}^3) + r_{23}(r_{12} -
r_{34})(r_{24}^3 - r_{34}^3).
$$
There exists a curve in $\Omega_r$ such that $g_3=0$, so the
previous arguments are not valid to prove that $f_3+f_6$ evaluated
at the curve $D=0$ restricted to $\Omega_r$ is negative.

In order to avoid the last obstacle, by using resultants we will prove 
that there are no values
$(a,b)$ in the interior of $\Omega_r$ for which $D$ and $g_3$ are
zero simultaneously.

Let $\mathrm{Res}[P,Q,x]$ denote the resultant of the polynomials
$P(x,y)$ and $Q(x,y)$ with respect to $x$. The resultant
$\mathrm{Res}[P,Q,x]$ is a polynomial in the variable $y$ satisfying
the following property: if $(x,y)=(x^*,y^*)$ is a solution of system
$P(x,y)=0$, $Q(x,y)=0$ then $y=y^*$ is a zero of
$\mathrm{Res}[P,Q,x]$. In other words, the set of zeroes of
$\mathrm{Res}[P,Q,x]$ contains the components $y$ of all 
solutions of the system $P(x,y)=0$ and $Q(x,y)=0$. We observe that it may contain
additional solutions that are not related with the solutions of the
system $P(x,y)=0$ and $Q(x,y)=0$.

Let
\begin{equation}
\label{eqres}
\begin{array}{l}
e_1=e_1(a,b,r_{13},r_{23},r_{24})=0,\\
e_2= e_2(a,b,r_{23},r_{24})=0,
\end{array}
\end{equation}
be the system defined by the two equations $D=0$ and $g_3=0$ after
performing the substitutions $r_{12}=a$, $r_{14}=1$ and $r_{34}=b$.
Here we think that the mutual distances $r_{13}$, $r_{23}$ and
$r_{24}$ are the positive solutions of system
\begin{eqnarray*}
e_3=e_3(a,b,r_{13})&=&r_{13}^2-(b^2+1)=0,\\
e_4=e_4(a,b,r_{23})&=&r_{23}^2-((a-b)^2+1)=0,\\
e_5=e_5(a,b,r_{24})&=&r_{24}^2-(a^2+1)=0.
\end{eqnarray*}
Using resultants we eliminate the variables $r_{13}$, $r_{23}$ and
$r_{24}$ from the equations \eqref{eqres} in the following way. We
eliminate the variable $r_{13}$ from $e_1$ by doing the resultant
$$
R_1=\mathrm{Res}[e_1,e_3,r_{13}].
$$
Then we eliminate the variable $r_{23}$ from $R_1$ and $e_2$ by
doing the resultants
$$
S_1=\mathrm{Res}[R_1,e_4,r_{23}],\qquad
S_2=\mathrm{Res}[e_2,e_4,r_{23}],
$$
and the variable $r_{24}$ from $S_1$ and $S_2$ by doing the
resultants
$$
T_1=\mathrm{Res}[S_1,e_5,r_{24}],\qquad
T_2=\mathrm{Res}[S_2,e_5,r_{24}].
$$
Here $T_1=16a^2b^2\widetilde{T}_1$ and $T_2=b^4\widetilde{T}_2$,
were $\widetilde{T}_1$ and $\widetilde{T}_2$ are polynomials of
total degree 64 and 16, respectively, in the variables $a$ and $b$.
Note that by the properties of resultants the set of solutions of
the new system of equations $\widetilde{T}_1=0$, $\widetilde{T}_2=0$
contains all solutions with $a,b\ne 0$ of system
\eqref{eqres}, or equivalently all the solutions with $a,b\ne 0$ of
system $D=0$, $g_3=0$ (thinking $D$ and $g_3$ as a function of $a,b$
via the mutual distances $r_{ij}$).

Now we solve system $\widetilde{T}_1=0$, $\widetilde{T}_2=0$ by
using resultants again. We compute
$\mathrm{Res}[\widetilde{T}_1,\widetilde{T}_2,a]$ and we get the
polynomial
\begin{equation}
\begin{array}{rcl}
W(b)&=& (b-1)^{16} b^{96} \left(b^2+1\right)^{20}
\left(b^2-b+1\right)^4 \left(b^2+b+1\right)^{16} \\ & &\left(3
b^2-4\right)^2 \left(21 b^4+12 b^2+16\right)^4 \left(21 b^4+24
b^2+16\right)^2 \\ & & W_1(b)\,W_2(b)\,W_3(b)\,W_4(b),
\end{array}
\label{eqres2}
\end{equation}
where $W_1$, $W_2$, $W_3$ and $W_4$ are polynomials of degrees 162,
202, 210, and 214 respectively. From properties of resultants the
set of zeroes of $W$ contains the component $b$ of all the solutions
of system $\widetilde{T}_1=0$, $\widetilde{T}_2=0$. Recall that we
are only interested in solutions belonging to $\Omega_r$, so we only
consider zeroes with $1/\sqrt{3}<b\leqslant 1$. We compute
analytically the zeroes of the first eight factors of $W$ and
numerically the zeroes of the remaining four factors of $W$ and we
get the following solutions with $1/\sqrt{3}<b\leqslant 1$
\begin{eqnarray*}
b&=0.61283303\dots, \quad &b=0.69216326\dots, \\
b&=0.71614387\dots,
&b=0.76874157\dots, \\ b&=0.79099409\dots, & b=0.82966657\dots, \\
b&=0.91953907\dots, &b=1.
\end{eqnarray*}
Next we compute $\mathrm{Res}[\widetilde{T}_1,\widetilde{T}_2,b]$
and we get the polynomial
\begin{equation}
\begin{array}{rcl}
w(a)&=& (a-1)^{24} a^{64} \left(a^2+1\right)^{40}
\left(a^2-a+1\right)^4 \left(a^2+a+1\right)^8 \\ & & \left(3
a^2-1\right)^2 \left(21 a^4+6 a^2+1\right)^4 \left(21 a^4+54
a^2+49\right)^2 \\ & & w_1(a)\,w_2(a)\,w_3(a)\,w_4(a),
\end{array}
\label{eqres21}
\end{equation}
where $w_1$, $w_2$, $w_3$ and $w_4$ are polynomials of degrees 162,
202, 210, and 214 respectively. The set of zeroes of $w$ contains
the component $a$ of all the solutions of system
$\widetilde{T}_1=0$, $\widetilde{T}_2=0$. Since we are only
interested in solutions belonging to $\Omega_r$, we only consider
zeroes with $1\leqslant a\leqslant \sqrt{2}$. As above we compute
analytically the zeroes of the first eight factors of $w$ and
numerically the zeroes of the remaining four factors of $w$ and we
get the following solutions with $1\leqslant a\leqslant \sqrt{2}$
\begin{eqnarray*}
a&=1,\phantom{07124596\dots} \ \qquad &a=1.04304633\dots,\\
a&=1.07124596\dots, \qquad &a=1.08484650\dots,\\
a&=1.09217286\dots,\qquad &a=1.10559255\dots,\\
a&=1.16459040\dots.\qquad &
\end{eqnarray*}
We consider $D$ and $g_3$ as a function of $(a,b)$ by substituting
the expressions of the mutual distances $r_{ij}$. The possible
solutions $(a,b)$ of system $D=0$, $g_3=0$ are the pairs
$(a,b)=(a^*,b^*)$ formed by a zero $a^*$ of $w$ with
$a^*\in[1,\sqrt{2}]$,  and  a zero $b^*$ of $W$ with
$b^*\in(1/\sqrt{3},1]$. Substituting all possible pairs
$(a,b)=(a^*,b^*)$ into $D=0$, $g_3=0$ we see that $(a,b)=(1,1)$ is
the unique pair that provides a solution of the system. This point
belongs to the boundary of $\Omega_r$. Therefore the function $g_3$
does not change its sign on the solutions of $D=0$ that belong to
$\Omega_r$.

It is easy to the check that the point
$(a,b)=(2/\sqrt{3},1/\sqrt{3})$, which belongs to the boundary of
$\Omega_r$, satisfies $D=0$. Moreover the function $g_3$ evaluated
at $(a,b)=(2/\sqrt{3},1/\sqrt{3})$ is negative. Hence $g_3\leqslant
0$ in $\Omega_r$. This end the proof of the theorem.
\end{proof}

In short, the set of realizable right trapezoid central
configurations is
$$
\Omega_r'=\{(a,b,c)\in\Omega_r\::\: D=0\},
$$
and it is plotted in Figure~\ref{figrt}. In
Figure~\ref{massestrapezir} we plot the masses along the right
trapezoid family parameterized by the parameter $a$. We note that
the limit case $(a,b)=(1,1)$ correspond to the square with equal
masses, and the limit case $(a,b)=(2/\sqrt{3},1/\sqrt{3})$
corresponds to a right trapezoid central configurations with masses
$m_1=1$, $m_4=0$ and
\begin{equation}
\begin{split}
m_2=&\frac{7 \left(8 \sqrt{3}-9\right) \left(49+8
\sqrt{7}\right)}{2511}= 0.94993335\dots,\\[5pt]
m_3=&\frac{2}{63} \left(8 \sqrt{3}-9\right)= 0.15417163\dots,
\label{massestr}
\end{split}
\end{equation}
such that the masses $m_1,m_2,m_3$ form an equilateral triangle with
edge length $2/\sqrt{3}$.

\begin{figure}
\includegraphics[width=5cm]{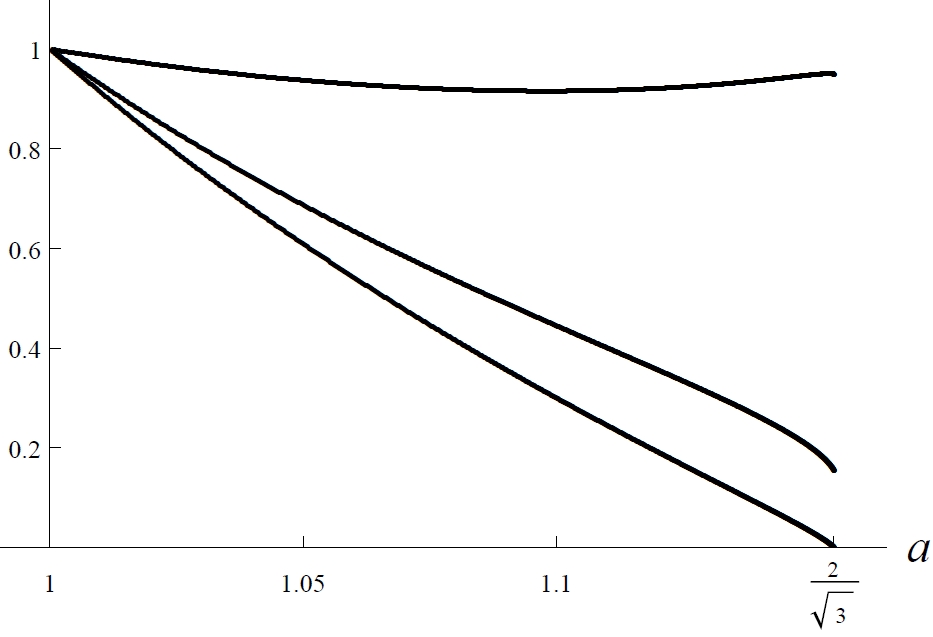}
\caption{The plot of the masses $m_2$ (upper line), $m_3$ (medium
line) and  $m_4$ (bottom line) along the right trapezoid family.}
\label{massestrapezir}
\end{figure}

\section{Trapezoid CC with a couple of equal masses}\label{masses}

In this section we will study the trapezoid CC with a pair of equal
masses. In \cite{cors} Cors and Roberts shown that for a given order
of the mutual distances in any co--circular central configuration
the set of masses is completely ordered.  A similar result for the
trapezoid central configurations has been obtained recently by
Santoprete \cite{santoprete}. Although in that case the masses are
not totally ordered. With our particular choice of labeling, from
\cite{santoprete} any trapezoid central configuration satisfies
\begin{equation}
m_4\leq m_3\leq m_1=1 \qquad m_4\leq m_2.
\end{equation}

Moreover, also from Santoprete \cite{santoprete}, we know that if
$m_3=m_1=1$ or $m_2=m_4$, then the central configuration is a
rhombus and the remaining two masses have to be equal. And if
$m_3=m_4$, then the central configuration is an isosceles trapezoid
and again the two remaining masses are necessarily equal.
Figure~\ref{figmasses2} shows the full set of masses for which a
trapezoid central configuration exist.

\begin{figure}
\unitlength=1cm
\begin{picture}(12,5)
\put(2,0){\includegraphics[width=7cm]{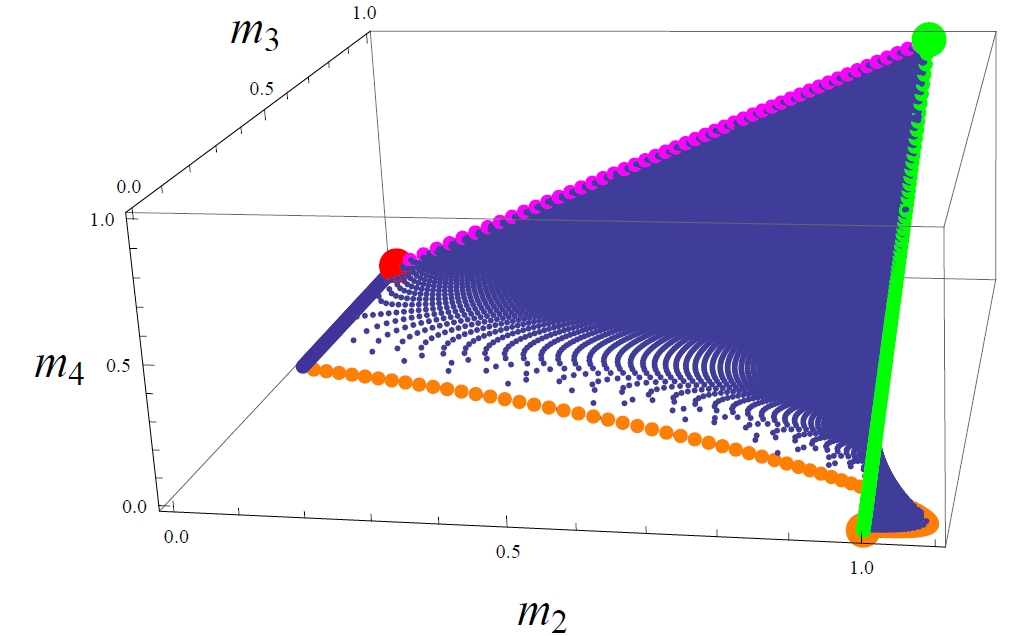}}
\put(2.7,1.3){\vector(1,0){3.4}} \put(0.5,2.3){\vector(1,0){3.8}}
\put(1.5,3.5){\vector(1,0){5}} \put(9.5,3){\vector(-1,0){1.2}}
\put(3.4,2.6){\tiny{(0,1,0)}} \put(7.9,4.4){\tiny{(1,1,1)}}
\put(7.3,0.2){\tiny{(1,0,0)}} \put(0,1.2){\tiny{Equilateral
triangle}} \put(0,2.2){\tiny{$P_2$}} \put(0,3.4){\tiny{Rhombus}}
\put(9.7,2.9){\tiny{Isosceles trapezoid}}
\end{picture}
\caption{The image of $\Omega$ in $m_2 m_3 m_4$--space under
equations (\ref{eq:fin12}), (\ref{eq:fin13}) and (\ref{eq:fin14})
with $m_1=1$. } \label{figmasses2}
\end{figure}

{From} the previous results only two cases of trapezoid central
configuration with only a pair of equal masses remains unknown,
namely, $m_2=m_3$ and $m_2=m_1=1$. In the next two subsections we
are going to show the existence of these two classes of trapezoid
central configuration. Something remarkable is that we will proved
analytically the existence of non--symmetric trapezoid central
configurations with two equal masses. As far as we know this result
was known numerically, but we think that is the first time that this
result is proved analytically in the four--body problem.

Now we study the value of masses along the boundary of $\Omega$.

\emph{The equilateral triangle family}.  By substituting the points
of $\mathcal{C}_1$ into \eqref{eq:fin12}, \eqref{eq:fin13} and
\eqref{eq:fin14} we get $m_4=0$ and
\begin{equation}
\label{m2onc1} m_2=\mu_2(c)=\frac{\left(8 \sqrt{3}-9 k_1^3\right)
k_2^3}{9 k_1^3 \left(k_2^3-8\right)}, \quad m_3=\mu_3(c)=\frac{2
\left(\sqrt{3}-3 c\right)^2 \left(8 \sqrt{3}-9 k_1^3\right)}{27
\left(\sqrt{3} c+1\right) k_1^3 k_2^2},
\end{equation}
where $k_1=\sqrt{c^2+1}$ and $k_2=\sqrt{3 c^2-4 \sqrt{3} c+7}$. The
function $\mu_2$ is defined  for all $c\in[-1/\sqrt{3},1/\sqrt{3})$,
$\mu_2(-1/\sqrt{3})=0$ and $\mu_2(c)\to 1$ when $c\to 1/\sqrt{3}$;
moreover it is increasing for $c\in[-1/\sqrt{3},c_0)$, it has a
maximum at $c_0= 0.27448350\dots$ with $\mu_2(c_0)=1.0912476\dots$
and it is decreasing for $c\in (c_0,1/\sqrt{3})$. The function
$\mu_3$ is defined  for all $c\in(-1/\sqrt{3},1/\sqrt{3}]$,
$\mu_3(1/\sqrt{3})=0$, $\mu_3(c)\to 1/2$ when $c\to -1/\sqrt{3}$ and
it is decreasing for $c\in(-1/\sqrt{3},1/\sqrt{3}]$. The plot of the
masses on $\mathcal{C}_1$ is given in Figure~\ref{massesvora} (a).

\emph{The rhombus family}. By substituting the points of
$\mathcal{C}_2$ into \eqref{eq:fin12}, \eqref{eq:fin14} and
\eqref{eq:squa} we get $m_1=m_3=1$ and
$$
m_2=m_4=\mu_r(c)=-\frac{\left((c-k)^2+1\right)^{3/2}
\left(\left((c+k)^2+1\right)^{3/2}-k^3\right)}{\left(k^3-
\left((c-k)^2+1\right)^{3/2}\right) \left((c+k)^2+1\right)^{3/2}},
$$
where $k=\sqrt{1+c^2}$. Note that on the rhombus family expression
\eqref{eq:fin13} is not well defined and we should take expression
\eqref{eq:squa} instead of it.  We can see that $\mu_r$ is an
increasing function defined for all $c\in[-1/\sqrt{3},0]$, such that
$\mu_r(-1/\sqrt{3})=0$ and $\mu_r(0)=1$. The plot of the masses on
$\mathcal{C}_2$ is given in Figure~\ref{massesvora} (b).

\emph{The isosceles trapezoid family}. On the isosceles trapezoid
family we know that $m_1=m_2=1$ and $m_3=m_4=\mu$ (see for instance
\cite{cors}), but since we do not have an explicit expression of the
solutions of $f(b,c)=0$, we cannot give the explicit expression of
$\mu$ as a function of the parameter $c$. Studying numerically the
function $\mu$  we see that it is a decreasing function in $c\in
(0,1/\sqrt{3})$ such that $\mu\to 1$ when $c\to 0$ and $\mu\to 0$
when $c\to 1/\sqrt{3}$. The plot of the masses on $\mathcal{C}_3$ is
given in Figure~\ref{massesvora} (c).

Note that if we approach to $P_2$ over the set $\mathcal{C}_2$ then
$m_3\to 1$, whereas if we approach to $P_2$ over the set
$\mathcal{C}_1$ then  $m_3\to 1/2$. Thus the limit of $m_3$ as we
approach to $P_2$ depends on the path you take and $m_3$ has a non
removable discontinuity at $P_2$.

\begin{figure}
\begin{tabular}{ccc}
\includegraphics[width=3.5cm]{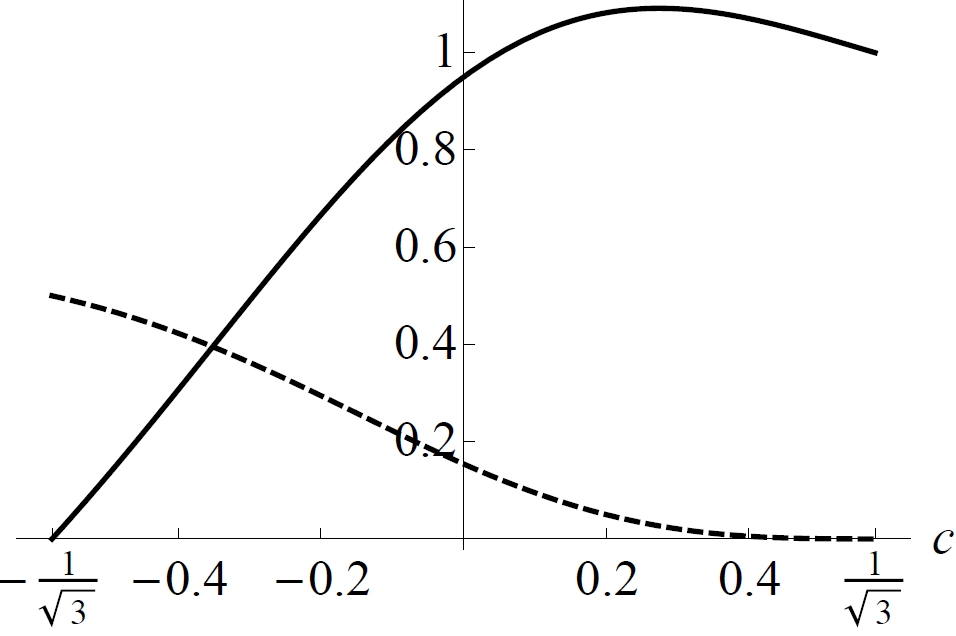} & \includegraphics[width=3.5cm]
{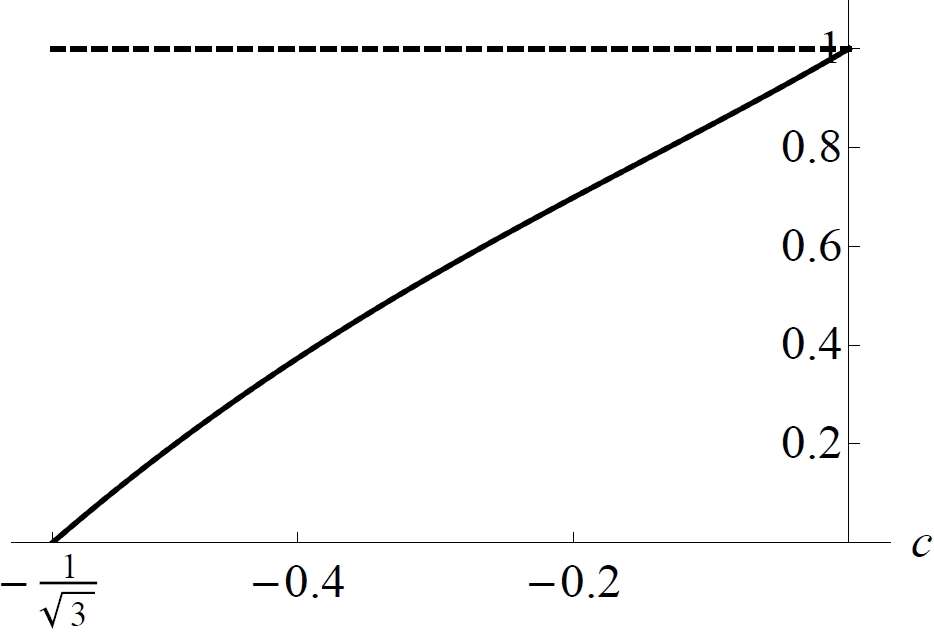} & \includegraphics[width=3.5cm]{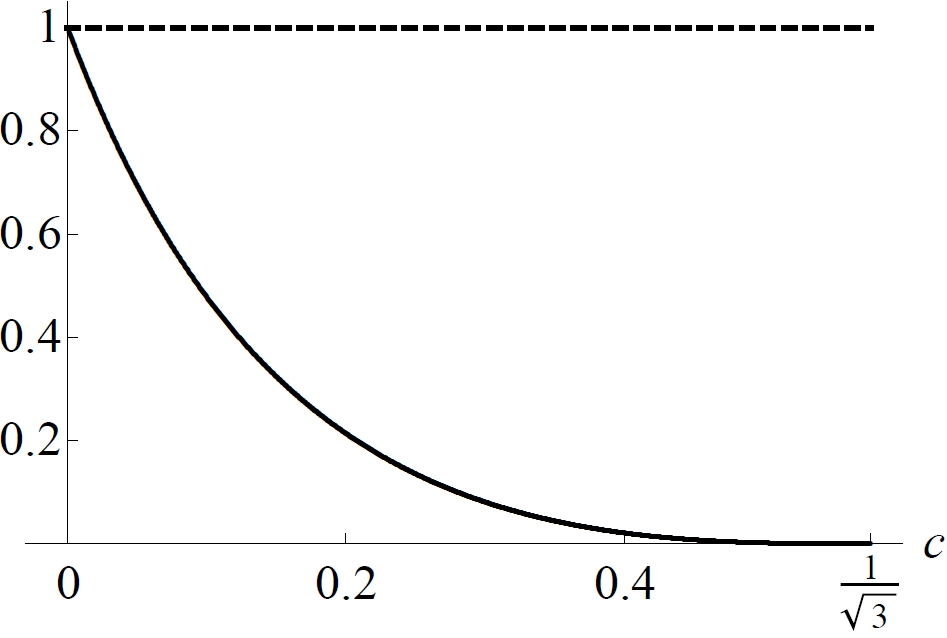}\\
(a) & (b) & (c)
\end{tabular}
\caption{(a) The plot of the masses $m_2$ (continuous line) and
$m_3$ (dashed line) on the boundary $\mathcal{C}_1$. (b) The plot of
the masses $m_2=m_4$ (continuous line) and $m_1=m_3=1$ (dashed line)
along the rhombus family. (c) The plot of the masses $m_1=m_2=1$
(dashed line) and $m_3=m_4$ (continuous line) along the isosceles
trapezoid family.  } \label{massesvora}
\end{figure}

\subsection{$m_2=m_3$}

Take $f=m_2-m_3$. We are interested in the solutions of $f=0$. On
$\mathcal{C}_2$ (corresponding to the rhombus family) we have
$r_{12} = r_{23} = r_{34} = r_{14}$ and $m_1=m_3=1$.

We know from equation (\ref{eq:fin12}) that on $\mathcal{C}_2$
\begin{equation}
m_2  =  \frac{  r_{23}^3 r_{24}^3 \, (r_{13}^3 - r_{14}^3)}{
r_{13}^3 r_{14}^3 \, (r_{24}^3 - r_{23}^3)} = \frac{ r_{24}^3 \,
(r_{13}^3 - r_{12}^3)}{ r_{13}^3 \, (r_{24}^3 - r_{12}^3)}.
\end{equation}
Therefore
\begin{eqnarray*}
m_2 < 1 &\iff&   r_{24}^3 \, (r_{13}^3 - r_{12}^3)<r_{13}^3 \, (r_{24}^3 - r_{12}^3 )\\
&\iff&- r_{24}^3 \, r_{12}^3<-r_{13}^3 \,  r_{12}^3  \iff r_{24}^3 >
r_{13}^3.
\end{eqnarray*}
The last inequality follows from the fact that $r_{24} > r_{13} >
1$. Therefore on $\mathcal{C}_2$ we have $f=m_2-m_3<0.$

Now on $\mathcal{C}_3$ (corresponding to the isosceles trapezoid
family) we have $ r_{23} = r_{14}$, $ r_{24} = r_{13}$ and
$m_1=m_2=1$. We also know that on this family $m_3=m_4$.

\begin{eqnarray*}
m_3=m_4 < 1  &\iff&   \frac{  r_{34}^2 \, (r_{13}^3 - r_{12}^3)}{
r_{12}^2  \, (r_{13}^3 - r_{34}^3)} \\
&\iff& r_{34}^2 (r_{13}^3 - r_{12}^3) < r_{12}^2 (r_{13}^3 - r_{34}^3)\\
&\iff& -r_{13}^3 (r_{12}^2 - r_{34}^2) < r_{12}^2 r_{34}^2 (r_{12} - r_{34})\\
&\iff& -r_{13}^3 (r_{12} - r_{34})  (r_{12} + r_{34})< r_{12}^2 r_{34}^2 (r_{12} - r_{34})\\
&\iff& -r_{13}^3  (r_{12} + r_{34})< r_{12}^2 r_{34}^2.
\end{eqnarray*}
That is, on $\mathcal{C}_3$ we have $f=m_2-m_3 > 0.$ Therefore for
any given path connecting $\mathcal{C}_2$ and $\mathcal{C}_3$ in
$\Omega$, there exist $(a,b,c)\in \Omega$ such that $f=0$ or
equivalently $m_2=m_3$.

Numerically we show that for any fixed path connecting
$\mathcal{C}_2$ and $\mathcal{C}_3$ in $\Omega$ the solution of
$f=0$ is unique. Curve representing the zeros of $f$ in $\Omega$
goes from $P_3$ to $(2/\sqrt{3},1/\sqrt{3},-0.351839354\dots) \in
\mathcal{C}_1$ (see Figure \ref{fig23}). To verify the last
statement, we observe that the function $f$ on $\mathcal{C}_1$
becomes
$$
f= \frac{\left(\frac{8}{3 \sqrt{3}}-\left(c^2+1\right)^{3/2}\right)
\left(-\frac{6 \left(\frac{1}{\sqrt{3}}-c\right)^2}{8-\left(3 c^2-2
\sqrt{3} c+1\right)^{3/2}}-\frac{\left(3 c^2-4 \sqrt{3}
c+7\right)^{3/2}}{8-\left(3 c^2-4 \sqrt{3}
c+7\right)^{3/2}}\right)}{\left(c^2+1\right)^{3/2}}.
$$
We apply Sturm Theorem to conclude that $f=0$, as a polynomial of
degree 24, has a unique real solution in
$c\in(-1/\sqrt{3},1/\sqrt{3})$, namely $c=-0.351839354\dots$

\begin{figure}
\includegraphics[width=5cm]{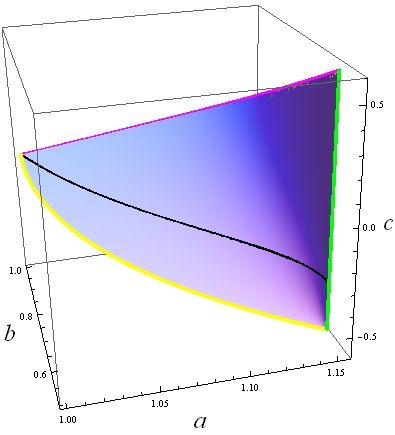} \hspace*{0.5cm}
\includegraphics[width=5.5cm]{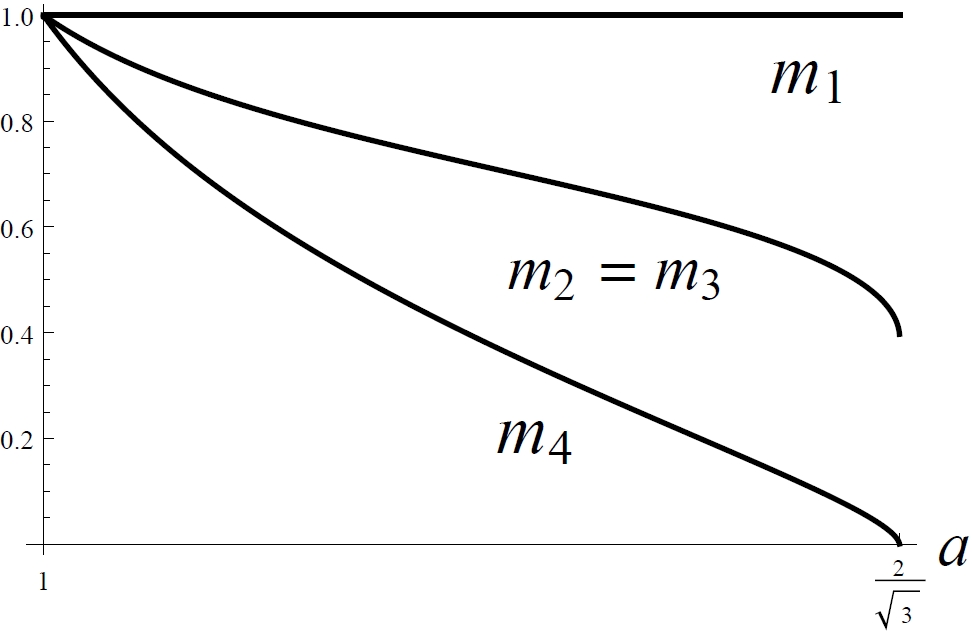}\\
\caption{(a) In black, the curve $m_2=m_3$ on the region $\Omega$.
(b) The curves $m_1=1$, $m_2=m_3$, and $m_4$ parametrized by $a$ on
the curve in (a).} \label{fig23}
\end{figure}

\subsection{$m_2=m_1=1$}

It is clear that $m_2=m_1=1$ along the isosceles trapezoid central
configuration family, that is, on the boundary $\mathcal{C}_3$.
Moreover $m_1=m_2=m_3=m_4=1$ on $P_3$, that is, on the square
configuration, and  $m_2- m_1<0$ on  $\mathcal{C}_2\cup P_2$.

On the boundary $\mathcal{C}_1 \cup \{ P_1, P_2 \}$ we have $m_4=0$
and $m_1, m_2$ and $m_3$ at the vertices of an equilateral triangle.
Let  $m_2=\mu_2(c)$ be the value of the mass $m_2$ on
$\mathcal{C}_1$, see \eqref{m2onc1}. We have seen that $\mu_2(c)\to
1$ when $c\to 1/\sqrt{3}$, and that $\mu_2$ is decreasing for
$c\in(c_0,1/\sqrt{3})$ with $c_0=0.27448350\dots$, so $m_2-m_1>0$
near $P_1$. On the other hand at the point $Q=(2/\sqrt{3},
1/\sqrt{3},0)$ corresponding to the right trapezoid we have $ m_2=
0.15417163\dots <1$, see \eqref{massestr} which implies $m_2-m_1<0$.
Therefore there exists a point $T$ on $\mathcal{C}_1$ such that
$m_2=m_1$. Again applying Sturm Theorem to a polynomial of degree 22
in $c$, we can conclude that $T$ is unique, and its coordinates are
$(2/\sqrt{3},1/\sqrt{3},c_1)$ where $c_1= 0.0517595932\dots$

Let
$\widetilde{\mathcal{C}}_1=\{(a,b,c)\in\mathcal{C}_1\::\:c<c_1\}$
and
${\widetilde{\mathcal{C}}_1}'=\{(a,b,c)\in\mathcal{C}_1\::\:c>c_1\}$.
Therefore, for any given path connecting $\widetilde{\mathcal{C}}_1$
and $P_3\cup \mathcal{C}_3\cup P_1\cup {\widetilde{\mathcal{C}}_1}'$
in $\Omega$, there exist $(a,b,c)\in \Omega$ such that $m_2=m_1=1$.

Numerically we show that for any fixed path connecting
$\widetilde{\mathcal{C}}_1$ and $P_3\cup \mathcal{C}_3\cup P_1\cup
{\widetilde{\mathcal{C}}_1}'$ in $\Omega$ the solution of $m_2-1=0$
is unique. Curve representing the zeros of $m_2-1$ joins the
boundaries $\mathcal{C}_3$ and $\mathcal{C}_1$. This curve goes from
$(1.13102016\dots, 0.896392974\dots, 0.234627188\dots)\in
\mathcal{C}_3 $ to $(2/\sqrt{3},1/\sqrt{3},c_1)\in \mathcal{C}_1 $
(see Figure~\ref{fig1me1} (a)). The values of the masses $m_3$ and
$m_4$ along this curve are plotted in Figure~\ref{fig1me1} (b).

\begin{figure}
\includegraphics[width=5cm]{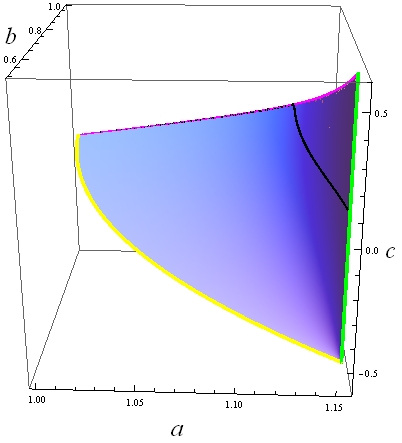} \hspace*{0.5cm}
\includegraphics[width=6.5cm]{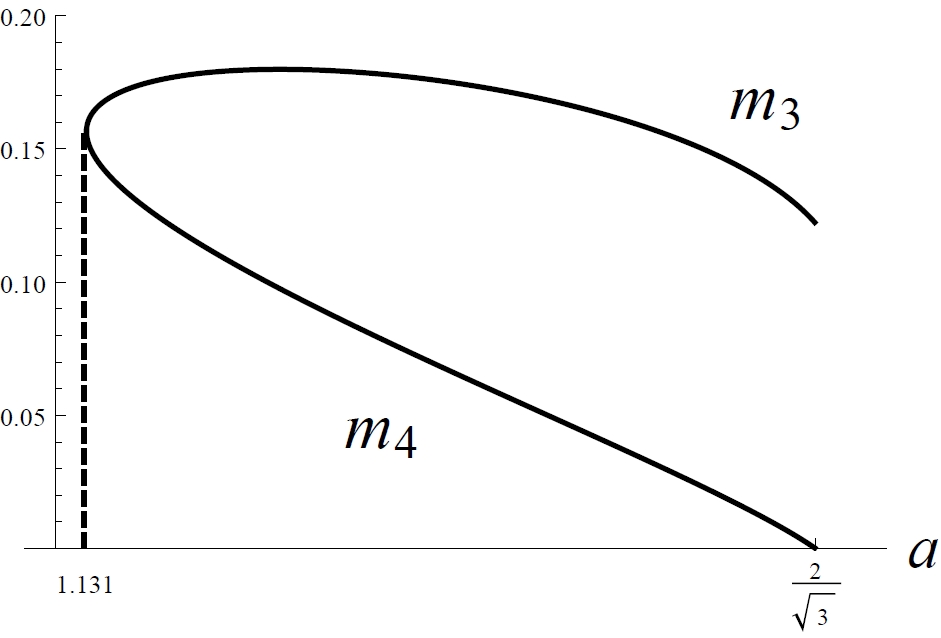}\\
(a) \hspace*{6cm} (b) \caption{(a) In black, the curve $m_2=1$ on
the region $\Omega$. (b) The curves of $m_3$ and $m_4$,
parameterized by $a$ on the curve in (a).} \label{fig1me1}
\end{figure}

\section{Conclusions}

Using the positions of the masses we have classified the set of
trapezoid central configurations. This set is a two--dimensional
surface whose boundaries are known families consisting in a rhombus,
an isosceles trapezoid and an equilateral triangle with a zero mass
off the triangle. Although a specific ordering of the masses has not
hold for any trapezoid central configuration, we can split the
two--dimensional surface in three disjoint regions where the set of
masses is totally ordered. Somewhat we must remark that we have
proved analytically the existence of non--symmetric trapezoid
central configurations with a pair of equal masses.

There exist a one--parameter family of right trapezoid central
configurations that also splits the two--dimensional surface in two
disjoint regions, namely the acute and the obtuse regions. Along
such a non--symmetric family the masses are completely ordered, that
is, the family belong to one of the previous three regions,
concretely the middle one, where the set of masses is totally
ordered. Moreover, when the pair of equal masses belong to biggest
parallel side, only acute trapezoid central configurations are
allow. On the other hand, when the two equal masses belongs to the
non--parallel side, only obtuse trapezoid central configurations are
allow.


\subsection*{Acknowledgements}

The first three authors are partially supported by a FEDER-MINECO
grant MTM2016-77278-P and a MINECO grant MTM2013-40998-P. The second
and third authors are also supported by an AGAUR grant number
2014SGR-568. The fourth author is supported by Fondo Mexicano de
Cultura A.C.

\end{document}